\newcommand\simeqreim{\stackrel{{\text{Re\slash{}Im}}}{\simeq}}
\newcommand\simeqcasida{\stackrel{{\text{Casida}}}{\simeq}}
\newcommand\R{\mathbb R}
\newcommand\C{\mathbb C}
\newcommand\ii{{\rm i}}
\newcommand\jj{{\rm j}}
\newcommand\Uperp{X_{\perp}}
\newcommand\realvs{Y_{\mathbb{R}}} 
\newcommand\complexvs{Y} 
\newtheorem{lemma}{Lemma}
\newtheorem{theorem}{Theorem}
\newtheorem{proposition}{Proposition}
\newtheorem{remark}{Remark}
\newtheorem{assumption}{Assumption}
\title{Linear response and resonances in adiabatic time-dependent
  density functional theory}
\author{Mi-Song Dupuy$^1$, Eloïse Letournel$^2$, Antoine Levitt$^3$}
\begin{document}
\maketitle

\begin{abstract}
  We consider the electrons of a molecule in the adiabatic
  time-dependent density functional theory approximation. We establish
  the well-posedness of the time evolution and its linear response
  close to a non-degenerate ground state, and prove the appearance of
  resonances at relevant frequencies. The main mathematical difficulty
  is due to the structure of the linearized equations, which are not
  complex-linear. We bypass this difficulty by reformulating the
  linearized problem as a real Hamiltonian system, whose stability is
  ensured by the second-order optimality conditions on the energy.
  \end{abstract}
\footnotetext[1]{Laboratoire Jacques-Louis Lions, Sorbonne Université, Paris, France}
\footnotetext[2]{Inria Paris and Universit\'e Paris-Est, CERMICS, Ecole des Ponts ParisTech, Marne-la-Vall\'ee, France}
\footnotetext[3]{LMO, Université Paris-Saclay, Orsay, France}
\tableofcontents


\section{Introduction}

The Density Functional Theory (DFT) in the Kohn-Sham formalism
\cite{hohenberg_kohn,kohn_sham} decribes the ground-state of a static
interacting system through a fictitious model of $N$ non-interacting
electrons, coupled by a mean field. For $N$ electrons (which we take
spinless for simplicity of notation), the equations for the orbitals
$\Psi^0=(\psi_i^0)_{i=1}^N$ are:
\begin{align}
  \label{eq:stationary_state}
  H[\rho_{\Psi^0}] \psi_i^0&= \lambda_i \psi_i^0,
\end{align}
where
\begin{align}
    H[\rho]&=\underbrace{-\frac{1}{2}\Delta}_\text{kinetic operator}+\underbrace{V_{\rm ext}}_\text{external (atomic) potential} +\underbrace{\left(\rho*\tfrac{1}{|r|}\right)}_{\text{Hartree potential $V_H$}} + \underbrace{v_{\rm xc}[\rho]}_{\text{exchange-correlation potential}}\\
    \rho_{\Psi^0}&=\sum_{i=1}^N |\psi_i^0|^2
\end{align}
Formally exact, DFT models are in practice approximated by an explicit
choice of the exchange-correlation potential $v_{\rm xc}$ (for instance,
the local density approximation \cite{lewin_lda}).

Consider a system in its ground state, described by the orbitals
$\Psi^{0}$. We wish to study how the system is perturbed by the
addition of a time-dependent potential $f(t) V_{P}$. The Runge-Gross
theorem \cite{runge_gross} forms the theoretical basis of time-dependent DFT (TDDFT) by ensuring that the electronic density of the time-dependent
interacting system can be described through the time-dependent
Schrödinger equation of a non-interacting system, again coupled by a
mean-field. Similarly to the static case, there exists an exact
exchange-correlation potential $v_{\rm xc}[\rho]$, but this time the
potential is non-local in time (depends on the values of the density
at all previous times), making accurate approximations much more
difficult. In this work, we assume the adiabatic approximation, in
which $v_{\rm xc}$ only depends on $\rho(t)$. Each orbital $\psi_i$
then satisfies the equation
\begin{align} \label{eq:TDDFT}
  \boxed{\begin{cases} \jj \partial_t \psi_i(t)&= (H[\rho_{\Psi(t)}]+ \varepsilon f(t) V_P)\psi_i(t)\\
        \psi_i(0)&= \psi_i^0 \end{cases}}
\end{align}
which will be the main object of our study. For reasons that will be
made clear shortly, we use the notation $\jj$ instead of the usual
$\ii$ for the imaginary unit in the Schrödinger equation; we reserve
the notation $\ii$ for a subsequent imaginary unit, to be used in time
Fourier transforms and resolvents.

\medskip

The TDDFT, even within the adiabatic simplification, contains an
enormous amount of physics, and is sufficient to model the
interaction of light with electrons, usually at a good qualitative and
sometimes quantitative level \cite{marques_tddft}. To extract the
relevant features (excitation energies, absorption spectra...), it is
very useful to linearize the equations around a solution of the
stationary equations
\eqref{eq:stationary_state}, which yield a dynamical solution
$\psi_{i}^{0} e^{-\jj \lambda_{i} t}$. Setting
\begin{align}
  \psi_i(t)&= e^{-\jj \lambda_i t} \left(\psi_i^0+ \varepsilon u_i^1(t)  + \dots \right)\label{eq:sternheimer_intro},
\end{align}
and truncating to first order, we obtain formally the linear response
TDDFT (LR-TDDFT) equations
\begin{align}\label{eq:casida_step_1}
  \jj\partial_t  u_i^1(t) &= \underbrace{\big(H[\rho_0] -\lambda_i\big) u_i^1(t)+  \left( \frac{dH}{d\rho} \frac{d\rho}{d\Psi} U^{1}(t) \right) \psi_i^0}_{(\mathcal M_{\rm dyn}(U^{1}))_{i}}  + f(t) V_P \psi_i^0 
\end{align}
where the derivatives are evaluated at $\Psi^{0}$, and where
$U^{1}(t) = (u_{i}^{1}(t))_{i=1}^{N}$. From this a number of useful
properties can be obtained, such as the susceptibility operator
(density-density response function) $\chi(t)$, a real-valued map
(containing no $\jj$) from
potential variations to density variations defined by $\chi(t) = 0$ on
$\R^{-}$ and
\begin{align}
  \frac{d\rho}{d\Psi} U^{1}(t) = \int_{0}^{t} \chi(t-t') V_{P} f(t') dt'.
\end{align}
Of particular interest is its Fourier transform
\begin{align}
  \label{eq:convention_fourier_transform}
  \widehat \chi(\omega) = \int_{-\infty}^{\infty} \chi(t) e^{\ii\omega t} dt,
\end{align}
interpreted as the response of the system to a periodic excitation
$e^{-\ii\omega t}$ (note the unusual sign convention, classical in
quantum mechanics, and the use of the imaginary unit $\ii$, distinct
from $\jj$). Because the distributional Fourier transform of $-i \theta(t) e^{-\ii \lambda t}$ is given
by
$\lim_{\eta \to 0^{+}} \frac 1 {\omega+\ii \eta - \lambda} = {\rm p.v.} \frac 1 {\omega-\lambda} - \ii \pi \delta(\omega-\lambda)$,
the eigenvalues of the operator $-\jj \mathcal M_{\rm dyn}$ appearing
in the linearized equation become singularities of $\widehat \chi(\omega)$; in
particular, the imaginary part of $\widehat \chi(\omega)$ has Dirac
peaks corresponding to each eigenvalue. An exception are ``hidden''
eigenvalues of $M_{\rm dyn}$ corresponding to modes that are never
excited or observed, including in particular transitions
between occupied eigenvectors.

A difficulty in realizing this program in practice is that the
operator $\mathcal M_{\rm dyn}$ is not a complex-linear operator (it does not
commute with $\jj$). This is because of the form of the density mapping
\begin{align}
  \left(\frac{d\rho}{d\Psi} U\right)(r) = \sum_{j=1}^N \overline{\psi_j}(r) u_j(r) + {\psi_j}(r) \overline{u_j}(r)
\end{align}
which involves a $\overline{u_{i}}$ term. This means that one cannot
solve the linearized equation by an exponential in the complex vector
space $(L^{2}(\R^{3}, \C))^{N}$, as is done in the
$\C$-linear case. This problem is usually bypassed by writing a
coupled equation for $U$ and $\overline U$ (involving a complex-linear
operator), which when discretized in a finite basis becomes the Casida
equations \cite{casida}. Mathematically, we will handle this by
working in the complexification
$(L^{2}(\R^{3},\C)^{N},\R) + \ii (L^{2}(\R^{3},\C)^{N},\R)$, where
$(L^{2}(\R^{3},\C)^{N},\R)$ is seen as a vector space over the real numbers
(and not the complex numbers, as usually done). This explains our use
of separate imaginary units for the unit $\jj$ of $L^{2}(\R^{3},\C)^{N}$
and $\ii$ of the complexification.

When there is no electron-electron interaction,
$(\mathcal M_{\rm dyn} U)_{i} = H[\rho_{0}]-\lambda_{i}$, and the
eigenfrequencies are simply the one-electron excitation energies,
given by the differences between the unoccupied part of the spectrum
$\sigma(H[\rho_{0}])$ and the occupied energies $\lambda_{i}$. Because
of the continuous spectrum $[0,\infty)$ of the effective Hamiltonian
$H[\rho_{0}]$, which is involved in the spectrum of
$\mathcal M_{\rm dyn}$ at energies greater than the ionization
threshold
$\lambda_{\rm ion} = -\max(\lambda_{1}, \dots, \lambda_{N})$, the
distribution ${\rm Im}(\widehat \chi(\omega))$ includes a continuous
part. This is related to the physical process of photoionization,
where light shined on a molecule ionizes an electron. It sometimes
happens that both the sharp Dirac peaks (related to bound state to
bound state excitations) and the continuous part (related to bound
state to scattering state transitions, i.e. ionization) are present at
the same frequencies, as in the case of the 1s$\to$ 2p excitation and
the 2s ionization in Beryllium \cite{toulouse}. When electron-electron interaction
is present, both features merge into a single resonance, a mathematically subtle phenomenon.

\medskip
In this paper,
\begin{itemize}
\item we formalize the mathematical framework required to treat the
  (real-linear) linearized equations, generalizing the Casida formalism;
\item we prove the well-posedness of \eqref{eq:TDDFT} for small
  $\varepsilon$ and finite times, allowing us to define $\chi(t)$
  rigorously;
\item we study the complex structure of $\widehat \chi$, showing in
  particular that the ionization threshold (first branch cut of
  $\chi$) is the same as that of the non-interacting system;
\item we show that, when the electronic interaction strength is small
  and when there are excitations and ionization processes at the same frequency,
  resonances appear in the analytic continuation of $\widehat \chi$.
\end{itemize}
We establish these results for a molecular system with a LDA-type
exchange-correlation potential, but the results of this paper would
apply (under suitable assumptions) to hybrid functionals (Hamiltonians
depending on $\Psi$, not just $\rho_{\Psi}$), or systems subject to
magnetic fields.

Our well-posedness analysis is based on the identification of (a part
of) the operator $\mathcal M_{\rm dyn}$ with the Hessian of the energy
of the ground-state problem. Under the crucial assumption that the
ground state is a non-degenerate local minimum of the energy, the
linearized equation \eqref{eq:casida_step_1} then appears as the
linearization of a Hamiltonian system at a stable equilibrium, which
possesses some degree of stability with respect to external
perturbations. Compared to the existing literature, we establish
well-posedness without assuming smallness of $v_{\rm xc}$, or
requiring an ad-hoc stability condition. Our study of resonances is
based on a Fermi golden rule applied to an operator related to
$\mathcal M_{\rm dyn}$. As far as we are aware, this is the first
rigorous result on resonances in a mean-field context.



\medskip

The first step in the mathematical study of our problem is to
establish the existence of a static solution $\Psi^{0}$. This is
usually obtained by variational methods; see
\cite{anantharaman2009existence} for the case we consider here. Then,
the well-posedness of various forms of the nonlinear Schrödinger
evolution equation is usually established using fixed-point arguments
combined with energy estimates; see for instance \cite{cances-lebris,
  dietze2021dispersive} for equations related to ours, and
\cite{sigal} for the case of the Kohn-Sham equations. Our objective in
this work is to study the linear response in the regime where
$\varepsilon$ is small, $t$ is large, but without imposing a
restriction on the size of $v_{\rm xc}$, which is not covered by
existing results.

The linear response regime has been studied in various works,
including \cite{Weinstein} for the nonlinear Schrödinger equation
(without external perturbation), and \cite{cances-stoltz} for a defect
in a periodic system in the Hartree model. In both cases, the
well-posedness of the linearized equation was established using
information on the sign of the nonlinearity, which is not possible for
general Kohn-Sham equations. The importance of second-order
information to establish various properties of the model was recently
emphasized in \cite{kemlin}.

The particular question of the linear response under the action of a
time-dependent source has also been studied. The first and third
authors of this work have studied the effects of a finite-size domain
in the simulation in \cite{dupuy2021finitesize}. In
\cite{corso_density-density_2023} and \cite{corso}, the
density-density response functions are studied via the perturbative
Dyson framework. The structure of the linear response equations have
also been studied mathematically in a linear algebraic context in \cite{bai2012minimization}.

The mathematical theory of resonances, pioneered by \cite{aguilar_combes}, is by now a well-established subject, summarized recently in \cite{dyatlov_mathematical_2019}. The particular type of resonances of interest here, arising from the perturbative interaction between bound and scattering states, has been studied in various physical settings and mathematical formalisms, notably in the time domain \cite{davies1974markovian, soffer1998time} and in the frequency domain \cite{barry}. Numerically,
resonances have been investigated in numerous works, with the closest
to our topic being \cite{toulouse} on the example of the resonance of
the Beryllium atom.

%
%

 \medskip

 We study the properties of the ground state and explain our
 complexification of the problem in in Section \ref{sec_system}. We then state our main results in Section \ref{sec_main_results}. The proofs of the well-posedness and behaviour at first order are gathered Section \ref{sec_well-posedness}, and Section \ref{sec:propagator_and_response} focuses on the study of the propagator and the frequency response function with its resonances.

\section{System of study} \label{sec_system}
\subsection{Notation}
We study a system of $N$ spinless electrons in a mean-field model. The
one-electron Hilbert space is the $\mathbb{C}$-vector space
$L^2(\mathbb{R}^3, \mathbb{C})$, endowed with the usual scalar product
\begin{align}
    \forall u, v \in L^2(\mathbb{R}^3, \mathbb{C}), \langle u |v \rangle = \int_{\mathbb{R}^3} \overline{u (r)} v (r)dr
\end{align}
We will often simply denote $L^2(\mathbb{R}^3, \mathbb{C}) = L^2$ and the
Sobolev space $H^2(\mathbb{R}^3, \mathbb{C}) = H^2$. The sets of
orbitals $\Psi = (\psi_{i})_{i=1}^{N}$ belong to the space
$(L^{2})^{N}$ of $N$-tuples of functions, with inner product
\begin{align}
  \langle\Psi|U\rangle&= \sum_{i=1}^N \langle \psi_i|u_i\rangle
\end{align}
We define the manifold of the orbitals $\mathfrak{M}_N$:
\begin{align}
  \mathfrak{M}_N&=\Big\{\Psi \in (L^2)^N:~\langle \psi_i|\psi_j\rangle=\delta_{ij},~\forall (i,j)\in \{1,... ,N\}^2\Big\}.
\end{align}
Any operator $A$ acting on $L^2$, such as a one-particle Hamiltonian, extends naturally to an operator acting on
$(L^{2})^{N}$ through
\begin{align}
  \label{eq:AU}
  (AU)_i=A(U_i), \quad \text{ for all } i =1, \dots, N.
\end{align}
Similarly, if $R$ is a $N \times N$ matrix,
we define
\begin{align}
  (U R)_{i} = \sum_{j=1}^{N} U_{j} R_{ji}.
\end{align}


We will quantify exponential decay using the spaces
\begin{align}
  L^2_{\alpha}&:= \{ u \in L^2: ~ r\mapsto u(r) e^{\alpha \langle r\rangle}\in L^2\} \\
  H^2_{\alpha}&:= \{ u \in L^2: ~ r\mapsto u(r) e^{\alpha  \langle r \rangle} \in H^2\} 
\end{align}
for $\alpha \in \mathbb{R}$, where we use the japanese bracket
notation $\langle r \rangle = \sqrt{|r|^2 + 1}$ for the regularized
absolute value.

In proofs, the notation $C$ will refer to an unimportant constant that may change from line to line.
\subsection{The static problem}
The static problem is to find the minimum of $\mathcal{E}(\Psi)$ for $\Psi$ in $\mathfrak{M}_N$, with $\mathcal{E}$ the energy
\begin{align}
    \mathcal{E}(\Psi)&=\sum_{i=1}^N\left( \frac{1}{2}\int_{\mathbb{R}^3} |\nabla \psi_i(r)|^2 + \int_{\mathbb{R}^3} V_{\rm ext}(r)|\psi_i(r)|^2 dr \right)+ \frac{1}{2} \int_{\mathbb{R}^3} \frac{\rho_{\Psi}(r) \rho_{\Psi}(r')}{|r-r'|}drdr' + \mathcal{E}_{\rm xc}[\rho_{\Psi}]\end{align}
    where $\mathcal{E}_{\rm xc}[\rho]= \int_{\mathbb{R}^3}e_{\rm xc}(\rho(r))dr$ and $e_{\rm xc}:\mathbb{R}^+ \rightarrow \mathbb{R}$.

\begin{assumption}[Conditions on $V_{\rm ext}$]\label{assumption1} The
  potential $V_{\rm ext}$ is $L^2+ L_{\varepsilon}^\infty$: for all
  $\varepsilon > 0$, there is a decomposition
  $V = V_{2,\varepsilon}+V_{\infty,\varepsilon}$ with $V_{2,\varepsilon} \in L^{2}(\R^{3},\R)$
  and $V_{\infty,\varepsilon} \in L^{\infty}(\R^{3},\R)$ with
  $\|V_{\infty,\varepsilon}\|_{\infty} \le \varepsilon$.
\end{assumption}
This is a rather standard assumption, which allows in particular
Coulomb potentials. It ensures that $V_{\rm ext}$ is a
$\Delta$-compact operator, and therefore that
$-\tfrac 1 2 \Delta + V_{\rm ext}$ is self-adjoint on $L^{2}$ with
domain $H^{2}$, and essential spectrum $[0,\infty)$ \cite[Theorem X.15]{ReedSimonII}.

\begin{assumption}[LDA approximation]\label{assumption2}
  The exchange-correlation energy $e_{\rm xc}:\mathbb{R}^+ \rightarrow \mathbb{R}$ is $\mathcal{C}^4$ and $e_{\rm xc}(0)=e_{\rm xc}'(0)=0$.
\end{assumption}
This in particular does not allow to use the homogeneous electron
gas LDA exchange approximation, as it is only $\mathcal{C}^1$ at $0$. The
arguments in this paper could possibly be adapted to treat this case,
using a careful analysis of density tails. However, the LDA
approximation is not expected to be accurate on regions where the
density is small (where correlation is strong), and therefore to avoid
unnecessary complications we will not do so.

Let $H[\rho]$ be the mean-field Hamiltonian:
\begin{gather}\label{eq:Hamiltonian}
    H[\rho]= -\frac{1}{2} \Delta+V_{\rm ext} +v_{\rm hxc}(\rho),
\end{gather}
where
\begin{align}
  v_{\rm hxc}(\rho) &= \left(\rho*\tfrac{1}{|r|}\right) + v_{\rm xc}(\rho), \ \text{and } 
  v_{\rm xc}(\rho) = e_{\rm xc}'(\rho).
\end{align}
If $\Psi$ is in $(H^{2})^{N}$, since $H^{2}$ is an algebra, $\rho$ is
in $H^{2}$, hence $v_{\rm hxc}(\rho) \in L^\infty$ and $H[\rho]$ is self-adjoint on $L^{2}$ with
domain $H^{2}$.

For $\Psi\in \mathfrak{M}_N$, we define the $\mathbb{R}$-linear operators $\mathcal{S}_{\Psi} : (H^2)^N \to H^2$ and $\mathcal{K}_{\Psi} : (H^2)^N \to (H^2)^N$ as follows, for all $U \in (H^2)^N$
\begin{align}\label{eq:Spsi}
  \mathcal{S}_{\Psi}(U)&= \frac{d\rho{}}{d\Psi{}} U = \sum_{j=1}^N \overline{\psi_j} u_j + {\psi_j} \overline{u_j},\\
  (\mathcal{K}_{\Psi}(U))_i&= \left(\frac{dv_{\rm hxc}}{d\rho}\mathcal{S}_{\Psi} (U)\right) \psi_{i}= \left(\left(\mathcal S_{\Psi}(U)*\frac{1}{|r|}\right)+  v_{\rm xc}'\left(\rho_{\Psi}\right) \mathcal{S}_{\Psi}(U)\right) \psi_i . \label{eq:def_K}\\
\end{align}

We can then compute
\begin{proposition}[Second-order expansion of the energy]\label{generalites_E_H_K}
    ~

    The energy $\mathcal{E}$ is $\mathcal C^{2}$ on $(H^{2})^{N}$.
    For any $\Psi, U$ in $(H^2)^N$,
        \begin{align}\boxed{
            \mathcal{E}(\Psi+U)= \mathcal{E}(\Psi)+ 2{\rm Re}\langle H[\rho_\Psi]\Psi |U \rangle +  \Big(\langle U |H[\rho_{\Psi}] U\rangle+ {\rm Re}\langle U | \mathcal{K}_{\Psi}(U) \rangle\Big)+o(\|U\|_{H^2}^2)}.
        \end{align}
    \end{proposition}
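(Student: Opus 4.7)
The plan is to expand each of the three pieces of $\mathcal{E}$ (kinetic plus external; Hartree; exchange--correlation) separately around $\Psi$, collect the linear and quadratic contributions in $U$, and bound everything of higher order by $o(\|U\|_{H^2}^2)$. The common ingredient throughout is the exact expansion of the density $\rho_{\Psi+U}=\rho_\Psi+\mathcal{S}_\Psi(U)+\rho_U$, where $\mathcal{S}_\Psi(U)$ is linear and $\rho_U=\sum_i|u_i|^2$ is quadratic in $U$ with $\|\rho_U\|_{H^2}\lesssim \|U\|_{H^2}^2$ thanks to the Banach algebra structure of $H^2(\R^3)$.

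The kinetic plus external-potential piece is already an exact quadratic form, so polarization immediately delivers $2\text{Re}\langle(-\tfrac12\Delta+V_{\rm ext})\Psi|U\rangle$ and $\langle U|(-\tfrac12\Delta+V_{\rm ext})U\rangle$. The Hartree energy $\tfrac12\langle\rho,\rho*\tfrac1{|r|}\rangle$ is quadratic in $\rho$, so its expansion in $\tilde\rho:=\mathcal{S}_\Psi(U)+\rho_U$ terminates exactly; Hardy--Littlewood--Sobolev controls the cubic and quartic terms (those containing at least one factor $\rho_U$) by $O(\|U\|_{H^2}^3)$. The surviving Hartree contributions are the first-order $\int V_H[\rho_\Psi]\,\mathcal{S}_\Psi(U)$, the second-order $\int V_H[\rho_\Psi]\,\rho_U=\sum_i\langle u_i|V_H[\rho_\Psi]u_i\rangle$, and $\tfrac12\int V_H[\mathcal{S}_\Psi(U)]\,\mathcal{S}_\Psi(U)$, the last of which will eventually form half of $\text{Re}\langle U|\mathcal{K}_\Psi(U)\rangle$.

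The main obstacle is the exchange--correlation term, the only genuinely nonlinear piece. The plan is a pointwise Taylor expansion of $e_{\rm xc}$ to second order around $\rho_\Psi(r)$, which is legitimate because $\rho_\Psi,\tilde\rho\in H^2\hookrightarrow L^\infty$ keeps all pointwise values inside a fixed bounded interval. By Assumption~\ref{assumption2}, the third derivative of $e_{\rm xc}$ is bounded on that interval, so the remainder is pointwise dominated by $C|\tilde\rho(r)|^3$, and the integrated bound $\int|\tilde\rho|^3\lesssim\|\tilde\rho\|_{L^\infty}\|\tilde\rho\|_{L^2}^2 = O(\|U\|_{H^2}^3)$ closes the expansion. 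The retained XC contributions are $\int v_{\rm xc}(\rho_\Psi)\,\mathcal{S}_\Psi(U)$ at first order, together with $\sum_i\langle u_i|v_{\rm xc}(\rho_\Psi)u_i\rangle$ and $\tfrac12\int v_{\rm xc}'(\rho_\Psi)\,\mathcal{S}_\Psi(U)^2$ at second order.

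To finish, the three linear contributions share the pattern $\int W\,\mathcal{S}_\Psi(U)=2\text{Re}\langle W\Psi|U\rangle$ with $W\in\{V_{\rm ext},V_H[\rho_\Psi],v_{\rm xc}(\rho_\Psi)\}$, so together with the kinetic term they assemble into $2\text{Re}\langle H[\rho_\Psi]\Psi|U\rangle$. The three second-order pieces of the form $\sum_i\langle u_i|Wu_i\rangle$ combine into $\langle U|H[\rho_\Psi]U\rangle$, while the two remaining second-order terms $\tfrac12\int V_H[\mathcal{S}_\Psi(U)]\,\mathcal{S}_\Psi(U)+\tfrac12\int v_{\rm xc}'(\rho_\Psi)\,\mathcal{S}_\Psi(U)^2$ collapse to $\text{Re}\langle U|\mathcal{K}_\Psi(U)\rangle$ via the elementary identity $\text{Re}\sum_i\overline{u_i}\psi_i=\tfrac12\mathcal{S}_\Psi(U)$ and the definition \eqref{eq:def_K}. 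The $\mathcal{C}^2$ regularity on $(H^2)^N$ then follows because the first and second differentials identified above depend continuously on $\Psi$ in the appropriate operator topologies, which is a routine consequence of the same algebra and Hardy--Littlewood--Sobolev estimates used above.
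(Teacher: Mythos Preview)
Your proof is correct and follows the same term-by-term strategy as the paper: both treat the kinetic/external, Hartree, and exchange--correlation contributions separately, relying on the $H^2$ algebra property, Hardy--Littlewood--Sobolev for the Coulomb term (which the paper packages as Lemma~\ref{lemma_stabilite}), and a pointwise Taylor expansion of $e_{\rm xc}$ for the XC piece. The only difference in emphasis is that the paper's appendix focuses on establishing the $\mathcal{C}^2$ regularity of each map and leaves the explicit identification of the quadratic form as implicit, whereas you carry out the expansion in detail and treat the regularity as a consequence; both are valid and amount to the same argument.
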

This expansion is a direct computation proved in Appendix \ref{sec:proof_generalites_E_H_K}.
\\

We now wish to define a notion of non-degenerate local minimum of
$\mathcal E$ on $\mathfrak{M}_N$. Clearly, no local minimum $\Psi$ can
be non-degenerate in the usual sense, since, for any unitary matrix $R \in U(N)$,
$E(\Psi)= E(\Psi R)$. Rather, we assume
\begin{assumption}[Non-degeneracy of the minimum of the energy.]\label{assumption3}
$\Psi^0$ is a local non-degenerate minimum of the energy on
$(H^2)^N\cap\mathfrak{M}_N$, in the sense that there exists $\gamma >0$
and a neighborhood $W$ of $\Psi^{0}$ in $\mathfrak{M}_{N}$ for the
$(H^{2})^{N}$ topology inside which
\begin{align}
  \mathcal{E}(\Psi)-\mathcal{E}(\Psi^0) \geq \gamma \min_{R \in {\rm U}(N)} \|\Psi - \Psi^{0} R\|^{2} \label{eq:non_degeneracy}.
\end{align}

\end{assumption}
The right-hand side measures the distance between the subspaces
spanned by $\Psi$ and $\Psi^{0}$; it is one of several equivalent
measures, another being for instance the norm of the difference of
projectors (see for instance \cite{ye2016schubert} for a review).

This assumption implies in particular that
$\langle H[\rho_{\Psi^{0}}] \Psi^{0}|U \rangle=0$ for all $U$ in the
tangent space
\begin{align}\label{eq:def_tangent_space}
  T_{\Psi^{0}} \mathfrak{M}_N = \left\{U \in (L^{2})^{N}, {\rm Re}\langle  u_{i}|\psi_{j}^{0} \rangle = 0\;\; \forall i,j \in \{1, \dots, N\}\right\},
\end{align}
and therefore that there is a $N \times N$ real symmetric matrix
$(\lambda_{ij})$ such that
\begin{align}
  H[\rho_{\Psi^{0}}] \psi_{j}^{0} = \sum_{i=1}^{N} \psi_{i}^{0} \lambda_{ij}.
\end{align}
After possibly a rotation of the $\psi_{i}$ to diagonalize the $(\lambda_{ij})$ matrix, this can be rewritten as
\begin{align}
  \boxed{H[\rho_{\Psi^{0}}] \psi_{i}^{0} = \lambda_{i} \psi_{i}^{0}}
\end{align}
which we assume in the sequel. Without loss of generality, we also
assume that $\lambda_{1} \le \lambda_{2} \dots \le \lambda_{N}$, but we
do \textit{not} assume that these are the lowest eigenvalues of
$H[\rho_{\psi^{0}}]$ (Aufbau principle). 

\begin{assumption}~
  \label{assumption_negative_eigvals}
  The occupied eigenvalues $\{\lambda_i\}_{i=1}^N$ are negative.
\end{assumption}
This assumption guarantees in particular that the corresponding eigenfunctions $\psi_i^0$ belong to $H^2_\alpha$ for some $\alpha>0$ (see for instance \cite[Lemma 5.1]{dupuy2021finitesize}).

\bigskip

From now on, to simplify the notation, we write
\begin{align}
  \rho_{0} = \rho_{\Psi^0}, \quad H_{0} = H[\rho_0], \quad \mathcal{K}_{0} = \mathcal{K}_{\Psi^0}.
\end{align}
We also write for $P_0$ the orthogonal projector on
${\rm Span}\left(\{\psi_i^{0}\}_{i=1}^N\right)$:
\begin{align}
  \label{eq:16}
  P_{0} = \sum_{i=1}^{N} |\psi_{i}^{0}\rangle \langle  \psi_{i}^{0}|
\end{align}
which acts on $L^{2}$, and therefore also on $(L^{2})^{N}$ by
\eqref{eq:AU}. In particular, we will often use ${\rm Ran}\left(1-P_0\right)$ to
refer to the space of orbital variations $U = (u_{i})_{i=1}^{N}$ which are all
complex-orthogonal to $\Psi^{0}$: $\langle  u_{i}|\psi_{j}^{0} \rangle=0$ for all
$i,j \in \{1, \dots, N\}$. This is the subspace on which Proposition \ref{proposition:operator_M_dyn} gives information:
  \begin{proposition}[Operator $\mathcal{M}_{\rm dyn}$]~ \label{proposition:operator_M_dyn}
      Let $\mathcal{M}_{\rm dyn}$ the $\mathbb{R}$-linear operator acting on
      $ U \in (L^2_{\mathbb{R}})^N$ as
      \begin{align}
        \mathcal{M}_{\rm dyn}&= \Omega+\mathcal{K}_{0} \label{eq:def_M_dyn},
      \end{align}
      with
      \begin{align}\label{eq:def_Omega}
        \Omega U &= (H_{0} - \Lambda) U,\\
        (\Lambda U)_{i} &= \lambda_{i} U_{i}, \quad \text{for }i = 1, \dots, N. \label{eq:def_Lambda}
      \end{align}

      Under Assumption~\ref{assumption3}, for all $U$ in
      $(H^2)^N \cap{\rm Ran}\left(1-P_0\right)$,
          \begin{align}
            \label{eq:truc}
            {\rm Re}\langle U |\mathcal{M}_{\rm dyn} (U)\rangle  \geq  \gamma\|U\|_{L^2}^2.
          \end{align}
  \end{proposition}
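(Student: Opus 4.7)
The plan is to identify $\mathrm{Re}\langle U|\mathcal{M}_{\rm dyn}U\rangle$ with the second variation of $\mathcal{E}$ along a curve on $\mathfrak{M}_N$ tangent to $U$ at $\Psi^{0}$, and then apply the non-degeneracy Assumption~\ref{assumption3}. The key link is Proposition~\ref{generalites_E_H_K}: for $V$ small, one has $\mathcal{E}(\Psi^0+V)-\mathcal{E}(\Psi^0) = 2\mathrm{Re}\langle H_0\Psi^0|V\rangle + \langle V|H_0 V\rangle + \mathrm{Re}\langle V|\mathcal{K}_0 V\rangle + o(\|V\|_{H^2}^2)$. Since $\Omega = H_0 - \Lambda$, the quadratic part evaluated on $U\in\mathrm{Ran}(1-P_0)$ differs from $\mathrm{Re}\langle U|\mathcal{M}_{\rm dyn}U\rangle$ by the term $\langle U|\Lambda U\rangle = \sum_i\lambda_i\|u_i\|^2$, and the first‑order term vanishes since $H_0\psi_i^0=\lambda_i\psi_i^0$ and $\langle \psi_i^0|u_i\rangle=0$.

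The obstruction is that $\Psi^0+tU$ is not on $\mathfrak{M}_N$, so one must retract to the manifold. Since $\langle \psi_i^0|u_j\rangle = 0$, the Gram matrix of $\Psi^0+tU$ is $I+t^2 M$ with $M_{ij}=\langle u_i|u_j\rangle$, so I set
\begin{equation}
\Phi(t) := (\Psi^0+tU)(I+t^2 M)^{-1/2} = \Psi^0 + tU - \tfrac{t^2}{2}\Psi^0 M + O(t^3) \in \mathfrak{M}_N,
\end{equation}
the $O(t^3)$ holding in $H^2$. Expanding $\mathcal{E}(\Phi(t))-\mathcal{E}(\Psi^0)$ via Proposition~\ref{generalites_E_H_K} to order $t^2$, the second‑order correction in $\Phi(t)-\Psi^0$ picks up a contribution from the first‑order term equal to $2\mathrm{Re}\langle H_0\Psi^0|-\tfrac{t^2}{2}\Psi^0 M\rangle = -t^2\sum_i\lambda_i\|u_i\|^2$, which precisely cancels the $\langle U|\Lambda U\rangle$ discrepancy above. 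This yields
\begin{equation}
\mathcal{E}(\Phi(t)) - \mathcal{E}(\Psi^0) = t^2\,\mathrm{Re}\langle U|\mathcal{M}_{\rm dyn}U\rangle + o(t^2).
\end{equation}

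Next I estimate the orbit distance $d(t)^2 := \min_{R\in U(N)}\|\Phi(t)-\Psi^0 R\|^2$. Since $\|\Phi(t)\|^2=\|\Psi^0 R\|^2=N$, minimizing the distance is equivalent to maximizing $\mathrm{Re}\,\mathrm{tr}(R^*A(t))$ with $A(t) = \langle\Psi^0|\Phi(t)\rangle$, and the maximum over $U(N)$ equals the sum of singular values of $A(t)$ (polar decomposition / Ky Fan). The orthogonality $U\in\mathrm{Ran}(1-P_0)$ kills the linear term in $A(t)$, giving $A(t) = I - \tfrac{t^2}{2}M + O(t^3)$, hence the singular values of $A(t)$ are $1-\tfrac{t^2}{2}\mu_i+O(t^3)$ for $\mu_i$ the eigenvalues of $M$, summing to $N-\tfrac{t^2}{2}\mathrm{tr}(M)+O(t^3)$. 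Therefore $d(t)^2 = t^2\|U\|_{L^2}^2 + O(t^3)$. Taking $t$ small enough so that $\Phi(t)\in W$, Assumption~\ref{assumption3} gives $\mathcal{E}(\Phi(t))-\mathcal{E}(\Psi^0)\ge \gamma d(t)^2 = \gamma t^2\|U\|_{L^2}^2 + O(t^3)$. Combined with the expansion above, dividing by $t^2$ and sending $t\to 0^+$ yields the desired $\mathrm{Re}\langle U|\mathcal{M}_{\rm dyn}U\rangle \ge \gamma\|U\|_{L^2}^2$. The main technical point is the bookkeeping of the second‑order retraction term and the Ky Fan identification of $d(t)^2$; everything else is essentially a Taylor expansion.
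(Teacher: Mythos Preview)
Your proof is correct and essentially identical to the paper's own argument: both retract $\Psi^{0}+tU$ onto $\mathfrak{M}_N$ via the L\"owdin orthonormalization $(\Psi^{0}+tU)(I+t^{2}M)^{-1/2}$, expand the energy to second order using Proposition~\ref{generalites_E_H_K} (picking up the $-\sum_i\lambda_i\|u_i\|^2$ correction from the second-order retraction term), and compute the orbit distance by reducing the Procrustes minimization to the trace of the singular values of $(\Psi^{0})^{*}\Phi(t)$. The only cosmetic difference is that the paper writes the second-order term abstractly as $U^{2}$ and uses the constraint $(U^{2})^{*}\Psi^{0}+(\Psi^{0})^{*}U^{2}=-(U^{1})^{*}U^{1}$, whereas you write it out explicitly as $-\tfrac{t^{2}}{2}\Psi^{0}M$.
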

  Recall from \eqref{eq:casida_step_1} that $\mathcal{M}_{\rm dyn}$ is also
  the operator which determines the evolution of the dynamic system
  linearized at first order.
  
  The proof of Proposition \ref{proposition:operator_M_dyn} is in
  Appendix \ref{sec:energy}; it is based on a quadratic model
  of \eqref{eq:non_degeneracy} near $\Psi^{0}$, with the quadratic form on
  the left-hand side
  being identified to $\mathcal{M}_{\rm dyn}$, and that on the
  right-hand side to $(1-P_{0})$. The condition \eqref{eq:truc} generalizes to the
  complex case the computations in \cite{kemlin}, from which our
  notation is taken.

\subsection{Time-dependent problem}
The local minimum $\Psi^{0}$
in Assumption \ref{assumption3} induces a stationary solution
\begin{align}
  \psi_i^0(t)=e^{-\jj \lambda_i t}\psi_i^0,
\end{align}
in $(H^2)^N \cap \mathfrak{M}_N$ of the unperturbed Schrödinger equation
\begin{align}
    \jj\partial_t \Psi^0(t) &= H[\rho_{\Psi^0(t)}]\Psi^0(t) = H_{0}\Psi^0(t), \label{eq:TI_schrodinger}
  \end{align}
We now perturb the evolution by a time-dependent multiplicative potential $f(t) V_{P}$. 

\begin{assumption}[Assumption on the perturbative potential]\label{assumption5}
  $f$ is causal ($f(t)=0$ for all negative $t$) and continuous. $V_P$
  is in $H^{2}$.
\end{assumption}
The causality assumption is done to simplify convolutions, since we
are only interested in the behavior for positive times. The regularity of $V_{P}$ is made to ensure a simple well-posedness theory of the
evolution equation in the algebra $H^{2}$; the linear theory (the
definition of the linear response operator $\chi$) requires much less
stringent hypotheses (and in particular accomodates polynomially
growing potentials).

The perturbed Schrödinger equation \eqref{eq:TDDFT} is then
\begin{align}
  {\begin{cases} \jj\partial_t \Psi(t)&= H[\rho_{\Psi(t)}]\Psi(t) + \varepsilon f(t) V_P \Psi(t)\label{eq:TD_schrodinger}\\
        \Psi(0)&= \Psi^{0}. \end{cases}}
\end{align}
To study this equation near the stationary solution
$\psi_i^0(t)=e^{-\jj\lambda_i t}\psi_i^0$, we set
\begin{align}\label{eq:ansatz_u}
  \psi_i(t)&= e^{-\jj\lambda_i t}(\psi_i^0 +\varepsilon u_i(t)).
\end{align}
We then have
\begin{align}  \label{eq:time_evolution_whole_system}\boxed{
    \begin{cases}
      \jj\partial_t U(t) &= \mathcal{M}_{\rm dyn}(U(t))+ f(t) V_P\Psi^0 +\mathcal{R}(U(t), \varepsilon, t) \\
      U(0)&=0
    \end{cases}}
\end{align}
where
\begin{align}
  \mathcal{R}(U, \varepsilon, t)_i&= \varepsilon f(t) V_P  U+ \left(H[\rho_{\Psi^{0}+\varepsilon U}]- H[\rho_0]\right) (\varepsilon^{-1}\Psi^{0}+U)- \mathcal{K}_0(U)
\end{align}
collects all higher order terms.



\subsection{Structure of the space}\label{sec:space_splitting}

\paragraph{Abstract structure.}
In the study of \eqref{eq:time_evolution_whole_system}, we need to
deal with a perturbation of the linear evolution equation
\begin{align}
  \jj \partial_{t} U = \mathcal{M}_{\rm dyn}(U)
\end{align}
We search for a solution of this equation in the set of
time-continuous functions valued in a Hilbert space
\begin{align}
  \label{eq:17}
  \mathcal{Y}:=(L^{2}(\R^{3},\C))^{N}.
\end{align}
To use the formalism of linear algebra and spectral calculus, we must
give this set a vector space structure $(\mathcal{Y}, \mathbb{K})$,
where $\mathbb{K}$ is a scalar field. Choosing either
$\mathbb{K}=\mathbb{C}$ or $\mathbb{K}=\mathbb{R}$ does not change the
solution, but only the tools allowed for the resolution. If we decide
to work in $(\mathcal{Y}, \mathbb{C})$, then $\mathcal{M}_{\rm dyn}$
is not linear, because it does not commute with $\jj$. We thus should
work in the real Hilbert space
\begin{align}
  \realvs= \left(\mathcal{Y}, \mathbb{R}\right)
\end{align}
equipped with the natural inner product
\begin{align}
  \langle U|U' \rangle_{\realvs} = {\rm Re}\langle U|U'\rangle_{\mathcal{Y}}.
\end{align}
$\mathcal{M}_{\rm dyn}$ becomes a linear operator on this space, which we call $M_{\rm dyn}$.
Multiplication by the scalar $\jj$ can be represented as a (real)
linear operator on this space, which we name $J$. We then need to
solve the linear equation
\begin{align}
    J\partial_t U = M_{\rm dyn}U
\end{align}
in $\realvs$. 
However, this structure does not enable us to use the tools of
spectral theory, which requires a complex Hilbert space. In
particular, we would like to exponentiate the unbounded
operator $JM_{\rm dyn} t$, which requires complex functional calculus.
We also study the linear response in frequency domain, so we need to
have a Fourier transform and thus imaginary numbers. To that end, we
will complexify the real Hilbert space $\realvs$, defining the
abstract complex
Hilbert space:
\begin{align}
  \complexvs &= \realvs + \ii \realvs\\
  &= \{U+\ii V: ~~ U, V \in (\mathcal{Y},\R)\}
\end{align}
This naturally has the structure of a complex Hilbert space with
scalar product defined by
\begin{align}
  \langle U+\ii V|U'+\ii V'\rangle_{\complexvs} = \Big(\langle U|U' \rangle_{\realvs} - \langle V|V' \rangle_{\realvs}\Big) + \ii \Big(\langle U|V' \rangle_{\realvs} + \langle V|U' \rangle_{\realvs}\Big).
\end{align}
$(\mathcal{Y},\mathbb{C})$ and $\complexvs$ are both complex Hilbert spaces, but are not isomorphic: for instance, if $(\mathcal{Y},\mathbb{C})$ was finite-dimensional with (complex) dimension $p$, $\realvs$ would have (real) dimension $2p$, and $\complexvs$ would have (complex) dimension $2p$. Note also that the imaginary unit $\jj$ on the complex vector space $((\mathcal{Y},\mathbb{C})$ is distinct from the imaginary unit $\ii$ on $\complexvs$.

\medskip

Any linear operator $L$ on $(\mathcal{Y},\mathbb{R})$ (such as
$M_{\rm dyn}$ or $J$) extends naturally to a complex linear operator on $\complexvs$
(again denoted by the same letter) by setting 
\begin{align}
  \label{eq:18}
  L(U+\ii V)=LU + \ii LV.
\end{align}
For instance, $J$ is an anti-self-adjoint operator on $\realvs$, with no
eigenvalues, and such that $J^2=-{\rm Id}$, and extends to an
anti-self-adjoint operator on $\complexvs$ with eigenvalues $\pm \ii$.


\paragraph{The Casida representation}
In order to actually perform computations on these objects, it is
useful to select a particular representation of the elements of
$\realvs$ and $\complexvs$. The simplest choice is to represent an element by its real and
imaginary parts,
\begin{align}
  U_{\rm r}+\jj U_{\rm j} \simeqreim
  \begin{pmatrix}
    U_{\rm r}\\U_{\rm j}
  \end{pmatrix}.
\end{align}
This effectively establishes a (real) isomorphism between
$\big(\mathcal{Y}, \mathbb{R}\big)= \big(L^2(\mathbb{R}^3, \mathbb{C})^N, \mathbb{R}\big)$ and
$\realvs= \big(L^2(\mathbb{R}^3, \mathbb{R})^N\big)^2$, whose
complexification is then isomorphic to
$\Big(\big(L^2(\mathbb{R}^3, \mathbb{C})^N\big)^2, \mathbb{C}\Big)$.
With this choice, $J$ is represented by the block operator
\begin{align}
  J \simeqreim \begin{pmatrix} 0 & -1 \\ 1&0\end{pmatrix}.
\end{align}
This representation is perfectly adequate for theoretical purposes,
but expressing operators in it can be cumbersome, especially when the
underlying orbitals are complex.

Alternatively, $J$ can be diagonalized in $\complexvs$, yielding an different representation
\begin{align}
  \label{eq:casidavector}
  U_{\rm r}+\jj U_{\rm j} \simeqcasida \frac{1}{\sqrt{2}}\begin{pmatrix} U_{\rm r}+\ii U_{\rm j} \\ U_{\rm r}-\ii U_{\rm j} \end{pmatrix},
\end{align}
in which now
\begin{align}
  J\simeqcasida\begin{pmatrix} \ii & 0 \\ 0 & -\ii \end{pmatrix}
\end{align}
In other words, $\realvs$ is real isomorphic to the real vector space:
\begin{align}
  \realvs\simeqcasida\left( \Big\{\frac 1 {\sqrt 2}\begin{pmatrix} U_{\rm r}+\ii U_{\rm j} \\ U_{\rm r}-\ii U_{\rm j} \end{pmatrix}, (U_{\rm r},U_{\rm j}) \in L^2(\mathbb{R}^3, \mathbb{R})^N\Big\}, \mathbb{R} \right),
\end{align}
which complexifies naturally to
$\big(L^2(\mathbb{R}^3, \mathbb{C})^N\big)^2$ by allowing $U_{\rm r}$ and $U_{\rm j}$ to
be complex-valued.

A great advantage of this representation is that many operators are
then immediate to write in block-matrix form.
For instance, if $A$ is a $\C$-linear operator on
$((L^{2}(\R^{3},\C))^{N},\C)$, then
\begin{align}
  \label{eq:19}
  A(U_{\rm r}+\jj U_{\rm j}) = A U_{\rm r} + \jj A U_{\rm j}\simeqcasida
  \begin{pmatrix}
    A(U_{\rm r}+\ii U_{\rm j})\\ \overline A (U_{\rm r}-\ii U_{\rm j}) 
  \end{pmatrix}
\end{align}
and so, by \eqref{eq:18}:
\begin{align}
  A \simeqcasida
  \begin{pmatrix} A&0\\0&\overline A  \end{pmatrix}
\end{align}
whereas its representation in the Real/Imaginary formalism is:
\begin{align}
  A \simeqreim
\begin{pmatrix} A_{\rm r} & -A_{\rm j} \\ A_{\rm j} & A_{\rm r}\end{pmatrix}
\end{align}
On the other hand, if $L U = A U + B \overline U$ is only $\mathbb{R}$-linear on $((L^{2}(\R^{3},\C))^{N},\C)$, its form in the Casida representation is simply:
\begin{align}
  L \simeqcasida \begin{pmatrix} A & B\\\overline{B}&\overline{A}  \end{pmatrix}
\end{align}
but its representation in the Real/Imaginary formalism is more cumbersome.


Table \ref{table_representations} (Appendix \ref{sec:appendix_tables}) summaries the representations of
some vectors and operators in $\complexvs$.

\paragraph{Equivalence between the formulations of the problem}
The space $\complexvs$ is the abstract complexification of
$\left((L^2(\mathbb{R}^3, \mathbb{C}))^N, \mathbb{R}\right)$, which is
isomorphic to $(L^2(\mathbb{R}^3, \mathbb{C}))^{2N}$, with both the
Casida and real/imaginary representations providing a particular
isomorphism. For simplicity of notations, and without choosing a
particular representation, we will make the identification
\begin{align}
  \complexvs = (L^2(\mathbb{R}^3, \mathbb{C}))^{2N},
\end{align}
which we abbreviate to $(L^2)^{2N}$, and not use the letter $\complexvs$ anymore.
In a similar manner, we denote $(H^2)^{2N}$ the complexified space of
$\left((H^2(\mathbb{R}^3, \mathbb{C}))^N, \mathbb{R}\right)$,
$(L^2_{\alpha})^{2N}$, $(H^2_{\alpha})^{2N}$ their weighted
counterparts, etc.

The operators $\mathcal{M}_{\rm dyn}, \mathcal{K}, \mathcal{S}$, real-linear
operators on $(L^{2}(\R^{3}, \C))^{N}$ naturally become $\mathbb{C}$-linear
operators on $(L^2)^{2N}$, which we denote by straight letters:
\begin{align}
  \label{eq:20}
  \mathcal{M}_{\rm dyn} \mapsto M_{\rm dyn}, \quad \mathcal{K}_{0} \mapsto K_{0}, \quad \mathcal{S_{0}} \mapsto S_{0}.
\end{align}
The
non-linear operator $\mathcal{R}(\cdot, \varepsilon, t)$ can also be
expressed as a non-linear operator $R(\cdot, \varepsilon, t)$ on
$(L^2)^{2N}$. This gives a reformulation of equation
\eqref{eq:time_evolution_whole_system} in
${\mathcal{C}^1\Big([0, T], \big(L^2\big)^{2N}\Big)} \cap {\mathcal{C}^0\Big([0, T], \big(H^2\big)^{2N}\Big)}$:
\begin{align}
  \label{eq:time_evolution_splitted}\boxed{
    \begin{cases}J \partial_t U(t) &= M_{\rm dyn}U(t)+ f(t) V_P\Psi^0 +R(U(t), \varepsilon, t) \\
      U(0)&=0  \end{cases}}
\end{align}
A priori, this equation yields solutions in $\complexvs = (L^2(\mathbb{R}^3, \mathbb{C}))^{2N}$ that do not
necessarily correspond to solutions in $\realvs = (L^2(\mathbb{R}^3, \mathbb{C}))^{N}$, and therefore to
solutions of our original problem
\eqref{eq:time_evolution_whole_system}. However, the equation
preserves the subspace $\realvs$. Therefore, if we establish uniqueness
of solutions of \eqref{eq:time_evolution_splitted}, they
automatically belong to $\realvs$ for all times, and so they solve our
original problem. In the remainder of this paper, we will only
consider \eqref{eq:time_evolution_splitted} and not go back to
\eqref{eq:time_evolution_whole_system} explicitly.



\subsection{Spectrum of \texorpdfstring{$M_{\rm dyn}$}{Mdyn}} 

We have seen that the operator $M_{\rm dyn}$ introduced in \eqref{eq:def_M_dyn} and Section \ref{sec:space_splitting} can be decomposed in $(L^2)^{2N}$ into:
\begin{align}
    M_{\rm dyn}&= \Omega+ K_{0}\ \\
    \Omega&= H_0-\Lambda. 
\end{align}
In particular,
$\sigma(\Omega) = \bigcup_{i=1}^{N} \sigma(H_{0}) - \lambda_{i}$: the
operator $\Omega$ contains the excitation energies of a hypothetical
non-interacting system with Hamiltonian $H_{0}$. As we will see later, some of these
excitations are ``hidden'', and the singularities of
$\widehat \chi$ are only related to the eigenvalues of $M$, where
\begin{align}
  \label{eq:21}
  M = (1-P_0) M_{\rm dyn} (1-P_0)
\end{align}
The spectrum of $(1-P_{0}) \Omega (1-P_{0})$ is only composed of differences
of energies between the unoccupied and occupied parts of the spectrum
of $H_{0}$.
In particular, it has essential spectrum starting at the ionization
threshold $-\lambda_{N}$. A representation of the spectrum of $\Omega$ and
$M$ is given Figure \ref{fig:spectrum_omega}. 
\begin{proposition}
    $M_\mathrm{dyn}$ is a self-adjoint operator acting on $(L^2)^{2N}$ with domain $(H^2)^{2N}$.
    Moreover, $\Omega$ and $M_\mathrm{dyn}$ have the same essential spectrum.
\end{proposition}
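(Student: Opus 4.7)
The plan is to verify the hypotheses of the Kato--Rellich theorem (for self-adjointness) together with Weyl's theorem on the invariance of the essential spectrum under relatively compact perturbations. Concretely, I would show that (i) $\Omega$ is self-adjoint on $(L^2)^{2N}$ with domain $(H^2)^{2N}$, (ii) $K_0$ extends to a bounded self-adjoint operator on $(L^2)^{2N}$, and (iii) $K_0(\Omega-z)^{-1}$ is compact for some (hence any) $z$ in the resolvent set of $\Omega$.

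Step (i) is straightforward: it was already noted after Assumption \ref{assumption1} that $H_0=-\tfrac12\Delta+V_{\rm ext}+v_{\rm hxc}(\rho_0)$ is self-adjoint on $L^2$ with domain $H^2$ (with $v_{\rm hxc}(\rho_0)\in L^\infty$), and $\Lambda$ is bounded self-adjoint, so $\Omega=H_0-\Lambda$ is self-adjoint on $(L^2)^N$ with domain $(H^2)^N$. Since $H_0$ commutes with complex conjugation (its potential is real) and is $\C$-linear on $(L^2(\R^3,\C))^N$, it is block-diagonal in the Casida representation, so its extension to $(L^2)^{2N}$ remains self-adjoint with the same domain. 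For (ii), the symmetry of $K_0$ as a real-linear operator on $\realvs$ for the inner product ${\rm Re}\langle\cdot|\cdot\rangle$ is built into Proposition~\ref{generalites_E_H_K}: ${\rm Re}\langle U|\mathcal K_0 V\rangle$ is the second differential of the energy, hence symmetric in $(U,V)$. This symmetry transfers to Hermiticity of the $\C$-linear extension to $\complexvs\simeq(L^2)^{2N}$. Boundedness on $(L^2)^{2N}$ reduces to boundedness on $\realvs$, which follows from $\psi_i^0\in L^\infty$ (Sobolev embedding $H^2\hookrightarrow L^\infty$ in $\R^3$), $v_{\rm xc}'(\rho_0)\in L^\infty$ (since $\rho_0\in L^\infty$ and $e_{\rm xc}\in\mathcal{C}^4$), and the pointwise Young-type estimate
\begin{align}
\|f*\tfrac{1}{|r|}\|_{L^\infty}\leq C\bigl(\|f\|_{L^1}+\|f\|_{L^2}\bigr),
\end{align}
applied to $f=\mathcal S_0(U)$, which belongs to $L^1\cap L^2$ because $\psi_j^0\in L^1\cap L^\infty$.

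Step (iii) is the main technical point. The decisive input is Assumption~\ref{assumption_negative_eigvals}, giving $\psi_i^0\in H^2_\alpha$ for some $\alpha>0$. For a bounded sequence $U_n$ in $(H^2)^N$, Rellich--Kondrachov yields, along a subsequence, strong $L^2_{\rm loc}$ convergence of each component. Splitting at radius $R$ and using exponential decay of $\psi_j^0$ to bound the tails, I obtain strong convergence of $\mathcal S_0(U_n)$ in $L^1\cap L^2$; the pointwise estimate above then upgrades this to uniform convergence of the Hartree potential $\mathcal S_0(U_n)*\tfrac{1}{|r|}$, and boundedness of $v_{\rm xc}'(\rho_0)$ gives $L^2$-convergence of $v_{\rm xc}'(\rho_0)\mathcal S_0(U_n)$. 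Multiplying by $\psi_i^0\in L^2$ concludes that $(K_0 U_n)_i$ converges in $L^2$, proving compactness of $K_0:(H^2)^{2N}\to(L^2)^{2N}$.

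Once (i)--(iii) are established, Kato--Rellich gives self-adjointness of $M_{\rm dyn}=\Omega+K_0$ on $(L^2)^{2N}$ with domain $(H^2)^{2N}$ (since $K_0$ is bounded, the relative bound is zero), and Weyl's theorem yields $\sigma_{\rm ess}(M_{\rm dyn})=\sigma_{\rm ess}(\Omega)$. The only genuinely delicate step is (iii): without the exponential decay of the occupied orbitals, multiplication by $\psi_i^0$ would only be bounded, not compact, from $L^2$ to $L^2$, so the argument fundamentally rests on Assumption~\ref{assumption_negative_eigvals}.
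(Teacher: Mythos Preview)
Your proposal is correct and follows essentially the same route as the paper: both arguments reduce to showing that $K_0$ is symmetric and $\Delta$-compact (equivalently $\Omega$-compact), then invoke Kato--Rellich and Weyl. The only minor differences are presentational: the paper verifies symmetry of $K_0$ by a direct computation using $\langle V, u\Psi^0\rangle = \langle S_0(V), u\rangle$ rather than by appealing to the Hessian interpretation, and it defers the compactness of $K_0$ to Lemma~\ref{lem:S_et_K} in the appendix (using a Hilbert--Schmidt argument for the xc part and Rellich--Kondrachov plus decay of $\Psi^0$ for the Hartree part) rather than spelling out the tail-splitting argument inline; your observation that $K_0$ is in fact bounded on $(L^2)^{2N}$ (not merely $(H^1)^{2N}\to(L^2)^{2N}$ as in Lemma~\ref{lem:S_et_K}) is a slight strengthening that makes the Kato--Rellich step trivial.
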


\begin{proof}
    It is sufficient to prove that $K_0$ is symmetric on $(L^2)^{2N}$ and that $K_0$ is $\Delta$-compact.
    Let $U,V \in (L^2)^{2N}$, then 
    \begin{align}
        \langle V, K_0 U \rangle_{(L^2)^{2N}} &= \langle V, \big(S_0(U) * \frac{1}{|\cdot |} + v_\mathrm{xc}(\rho) S_0(U) \big) \Psi_0 \rangle_{(L^2)^{2N}} \\
        &= \langle S_0(V), S_0(U) * \frac{1}{|\cdot |} + v_\mathrm{xc}(\rho) S_0(U) \rangle_{(L^2)^2} \\
        &= \langle K_0(V), U \rangle_{(L^2)^{2N}},
    \end{align}
    where we have used that $v_\mathrm{xc}$ is real-valued, that the convolution with $\frac{1}{|\cdot |}$ is symmetric and that for $u \in (L^2)$, we have $\langle V, u \Psi_0 \rangle_{(L^2)^{2N}} = \langle S_0(V), u \rangle_{(L^2)^2}$.

    By Assumption \ref{assumption1}, $V_\mathrm{ext} \in L^2+L^\infty_\varepsilon$. 
    By the decay of $\psi_i$, $\rho * \frac{1}{|\cdot|} \in L^2+L^\infty_\varepsilon$.
    By Assumption \ref{assumption2}, $v_\mathrm{xc}$ is also $L^2+L^\infty_\varepsilon$. 
    Thus $V_\mathrm{tot} = V_\mathrm{ext}+\rho * \frac{1}{|\cdot|}+v_\mathrm{xc}$ is $\Delta$-compact.
    From Lemma~\ref{lem:S_et_K}, $K_0$ is $\Delta$-compact, thus $\Omega$ and $M_\mathrm{dyn}$ have the same essential spectrum.
\end{proof}


$\Omega$ and $M_{\rm dyn}$ have the same essential spectrum but their eigenvalues may differ.
As we will show, the eigenvalues embedded in the essential spectrum of $\Omega$
generically appear as resonances for $M_{\rm dyn}$.




\begin{figure}[!h]
    \centering
    \includegraphics*[width=0.8\textwidth]{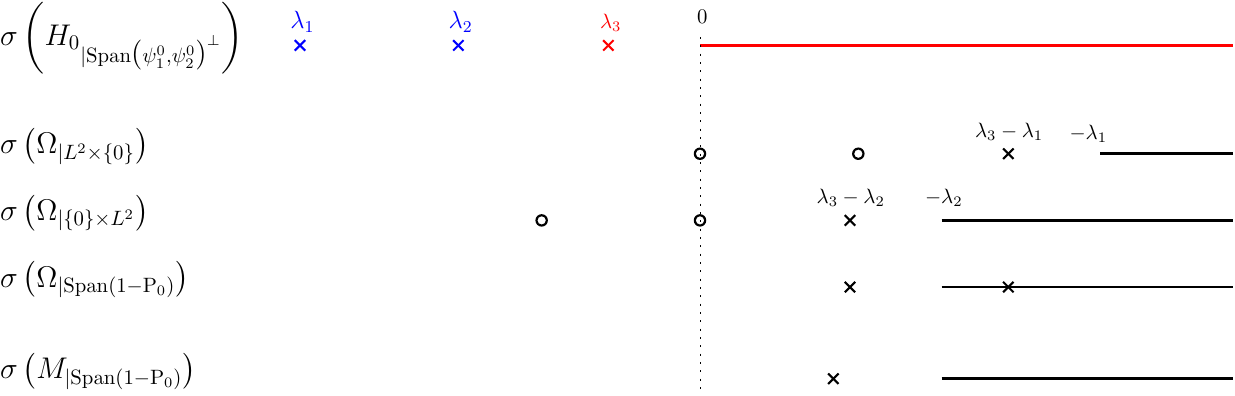}
    \caption{A representation of the spectrum of $\Omega$ and $M$ on
      ${\rm Ran}(1-P_0)$ for $N=2$. The spectrum of $H[\rho_0]$ is
      represented on the first line, with the occupied eigenvalues in
      blue and the energies of the unoccupied states in red. The
      second and third lines represent the spectrum of $H[\rho_0]-\lambda_i$
      $(i=1,2)$, \textit{i.e.} the spectrum of $\Omega$ on each sector of the
      space. The circles correspond to eigenvectors in
      ${\rm Ran}(P_0)$, the ``hidden'' excitations. The spectrum of $\Omega$ restricted to ${\rm Ran}(1-P_0)$ is the combination of the shifted spectra, on the fourth line. The last line shows the spectrum of $M$ restricted to ${\rm Ran}(1-P_0)$.  } \label{fig:spectrum_omega}
\end{figure}

\section{Main results} \label{sec_main_results}



\subsection{Well-posedness and linear response}
Our first result is the well-posedness of
\eqref{eq:time_evolution_splitted} (and therefore of \eqref{eq:TDDFT}):
\begin{theorem}[Well-posedness of \eqref{eq:time_evolution_splitted},
  linear response]~\label{theorem1}
  Under Assumptions 1 to 4, for any $T>0$, there exists
  $\varepsilon_{0}>0$ such that, for all
  $0 < \varepsilon \le \varepsilon_{0}$, 
  \begin{itemize}
\item  The equation
    \eqref{eq:time_evolution_splitted} admits a unique solution $U$ in
    $\mathcal C^{1}([0,T], (L^{2})^{2N}) \cap \mathcal C^{0}([0,T], (H^{2})^{2N})$.

    
  \item We have
    \begin{align}
      \rho(t)& = \rho_{\Psi^{0}(t)+\varepsilon U(t)} = \rho_0 + \varepsilon \int_{0}^{t}\chi(t-t') V_{P} f(t') dt' + \varepsilon^{2} r(t),
    \end{align}
    with $\|r(t)\|_{H^{2}} \le C_{T}$ for all $t \in [0,T]$ for some $C_{T}$
    independent of $t$ and $\varepsilon$.
    The linear response function of the system $\chi(t)$, $t \in \R$ is a uniformly bounded operator from $H^2$ to $H^2$, and given by 
    \begin{align}
    \label{eq:chi_real_time}
    \boxed{
        \chi(t) V_P = - \theta(t) S_{0} \left( e^{-tJM} J (1-P_0)V_P \Psi^0\right)}
    \end{align}
for all $V_P \in H^2$, with $\theta$ being the Heaviside function.
\end{itemize}
  \end{theorem}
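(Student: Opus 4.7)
The plan is to decompose the proof into three parts: constructing the linear propagator $e^{-tJM_{\rm dyn}}$ on $(L^2)^{2N}$ and $(H^2)^{2N}$, then recasting \eqref{eq:time_evolution_splitted} as a Duhamel integral equation and running a Banach fixed-point argument for small $\varepsilon$, and finally extracting the $\varepsilon$-linear component of the density to identify the susceptibility $\chi$.

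The central analytic input is the construction of the group $e^{-tJM_{\rm dyn}}$. The operator $M_{\rm dyn}$ is self-adjoint on $(L^2)^{2N}$ with domain $(H^2)^{2N}$ (as just proved), while $J$ is bounded and skew-adjoint with $J^2=-\mathrm{Id}$. Although $-JM_{\rm dyn}$ is not skew-adjoint in the usual $L^2$ inner product, the second-order optimality condition of Proposition \ref{proposition:operator_M_dyn} supplies the coercivity $\mathrm{Re}\langle U,M_{\rm dyn}U\rangle \ge \gamma\|U\|_{L^2}^2$ on $(H^2)^{2N}\cap\mathrm{Ran}(1-P_0)$. Combined with an $H^2$ upper bound, this gives an equivalent inner product $\langle\cdot,M_{\rm dyn}\cdot\rangle$ on $\mathrm{Ran}(1-P_0)$ in which $-JM_{\rm dyn}$ is skew-adjoint, so that Stone's theorem furnishes a unitary group on that subspace. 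The finite-dimensional complement $\mathrm{Ran}(P_0)$ carries an independent flow that is at worst exponential in $t$; gluing them yields a strongly continuous group with $\|e^{-tJM_{\rm dyn}}\|_{(L^2)^{2N}}\le C_T$ on $[0,T]$. A commutator / Kato-type energy estimate using $[M_{\rm dyn},\Delta]$ then propagates $(H^2)^{2N}$ regularity with a similar bound.

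For local well-posedness, Duhamel's formula reduces \eqref{eq:time_evolution_splitted} to
\begin{align}
U(t) = \int_0^t e^{-(t-s)JM_{\rm dyn}}(-J)\bigl[f(s)V_P\Psi^0 + R(U(s),\varepsilon,s)\bigr]\,ds,
\end{align}
posed in $\mathcal{C}^0([0,T],(H^2)^{2N})$. The forcing $fV_P\Psi^0$ lies in this space since $V_P,\psi_i^0\in H^2$ and $H^2$ is a Banach algebra. Using the $\mathcal{C}^4$ regularity of $e_{\rm xc}$ (Assumption \ref{assumption2}) and again the algebra property, one checks $\|R(U,\varepsilon,t)\|_{H^2}\le C\varepsilon(1+\|U\|_{H^2}^2)$ together with an $O(\varepsilon)$ Lipschitz estimate on bounded balls. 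The integral operator is thus a contraction on a ball of fixed radius in $\mathcal{C}^0([0,T],(H^2)^{2N})$ for $\varepsilon\le\varepsilon_0$ small enough; unique solvability follows, and the $\mathcal{C}^1([0,T],(L^2)^{2N})$ regularity is read off the equation.

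For the linear response, comparing $U$ with $U^1(t):=\int_0^t e^{-(t-s)JM_{\rm dyn}}(-J)f(s)V_P\Psi^0\,ds$ and using $R=O(\varepsilon)$ gives $\|U-U^1\|_{\mathcal{C}^0([0,T],H^2)}=O(\varepsilon)$. Expanding $\rho_{\Psi^0+\varepsilon U}=\rho_0+\varepsilon S_0(U)+\varepsilon^2 Q(U)$, with $Q$ a quadratic form bounded on $H^2$, produces the claimed density expansion, with $S_0(U^1)$ providing the convolution $\int_0^t\chi(t-s)V_P f(s)\,ds$. To match \eqref{eq:chi_real_time}, we decompose $V_P\Psi^0=(1-P_0)V_P\Psi^0+P_0V_P\Psi^0$ and exploit $[J,P_0]=0$ together with the identity $M=(1-P_0)M_{\rm dyn}(1-P_0)$: the $\mathrm{Ran}(P_0)$ component of $U^1$ corresponds to infinitesimal gauge rotations of the orbitals, which are invisible to $S_0$ at first order, so only the $(1-P_0)V_P\Psi^0$-driven part contributes, yielding \eqref{eq:chi_real_time}. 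The principal obstacle is the construction and uniform boundedness of $e^{-tJM_{\rm dyn}}$ despite $M_{\rm dyn}$ being only coercive on $\mathrm{Ran}(1-P_0)$; this is precisely where the Hamiltonian structure combined with Assumption \ref{assumption3} does the essential work and why no ad-hoc stability hypothesis is needed.
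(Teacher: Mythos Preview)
Your overall strategy matches the paper's---build the propagator $e^{-tJM_{\rm dyn}}$, recast the equation as a Duhamel integral, solve by a contraction/implicit-function argument, and read off $\chi$ from the first-order term---but there is a genuine gap in the propagator construction.

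You split $(L^2)^{2N}=\mathrm{Ran}(P_0)\oplus\mathrm{Ran}(1-P_0)$ and assert that each piece carries an independent flow. This is false: neither subspace is invariant under $-JM_{\rm dyn}$. The paper instead refines $\mathrm{Ran}(P_0)=Y_S\oplus Y_A$ (variations $\Psi^0 S$ and $\Psi^0 A$ with $S$ Hermitian, $A$ skew-Hermitian) and shows that $-JM_{\rm dyn}$ is block lower-triangular in the ordering $(S,\perp,A)$: the $Y_S$ component evolves autonomously, then $U_\perp$ solves $\partial_t U_\perp = -JM U_\perp + L_{\perp S}U_S$, and finally $U_A$ is fed by both. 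The uniform $H^2$ bound on $e^{-tJM}$ on $Y_\perp$ is not obtained via a commutator estimate with $\Delta$ but via the conjugation $e^{-tJM}=M^{-1/2}e^{-tM^{1/2}JM^{1/2}}M^{1/2}$: since $M^{1/2}JM^{1/2}$ is skew-adjoint, its group is unitary, and the proof shows that $\|M\cdot\|_{L^2}$ and $\|M^{1/2}\cdot\|_{L^2}$ are equivalent to the $H^2$ and $H^1$ norms on $\mathrm{Ran}(1-P_0)$ (using coercivity plus $\Delta$-compactness of $V_{\rm tot}+K_0$, then interpolation). This yields a uniform-in-$t$ bound on $Y_\perp$; the full propagator $e^{-tJM_{\rm dyn}}$ then grows only like $1+t^2$ via the two successive Duhamel integrals. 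Your ``equivalent inner product $\langle\cdot,M_{\rm dyn}\cdot\rangle$'' idea is the same trick in disguise, but you apply it with $M_{\rm dyn}$ on $\mathrm{Ran}(1-P_0)$, where $-JM_{\rm dyn}$ does not act; it must be $M$.

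For the reduction of $\chi$ to the $(1-P_0)$ part, saying ``the $\mathrm{Ran}(P_0)$ component of $U^1$ is a gauge rotation'' skips a step: $P_0V_P\Psi^0\in Y_S$ (the Gram matrix $\langle\psi_i^0|V_P\psi_j^0\rangle$ is Hermitian because $V_P$ is real), and $J$ maps $Y_S$ into $Y_A$, so the forcing $-JV_P\Psi^0$ has zero $Y_S$-component. Hence $U^1_S\equiv 0$, and only then is the remaining $P_0$-part purely in $Y_A$ and killed by $S_0$. Your use of a Banach fixed point instead of the paper's implicit function theorem is a harmless variant.
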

  It also results from the estimates in the theorem that $\chi(t)$ defines a
  tempered distribution with values in ${\mathcal L}(H^{2}, H^{2})$,
  with distributional time Fourier transform given by
  \begin{align}
  \label{eq:chi_frequency}
    \hat{\chi}(\omega) V_P&= \lim_{\eta \rightarrow 0^+} -S_{0}\left(\frac 1 {M + \ii (\omega+\ii \eta) J}(1- P_0)V_{P} \Psi^{0} \right).
  \end{align}

  The proof of this theorem is given in Section \ref{sec_well-posedness}. It is based on
a study of the linearized equation, using the coercivity result in
Proposition \ref{proposition:operator_M_dyn} to define the exponential
$e^{-tJM}$, as well as a fixed-point argument in the $H^{2}$ topology.

\begin{remark}[Dyson equation in TDDFT]
    To see the connection between~\eqref{eq:chi_real_time}
    or~\eqref{eq:chi_frequency} with the TDDFT equations in the Dyson formalism, one notices that the operator 
    \(
      \left\lbrace  
      \begin{aligned}
        H^2 & \to (H^2)^{2N} \\
        V_P &\mapsto V_P\Psi_0
      \end{aligned}
    \right.
    \)
    is the $L^2$ adjoint of $S_0$.
    Hence $K_0(U) = S_0^*\Big( \frac{d v_{\mathrm{hxc}}}{d\rho} S_0(U) \Big)$.  
    Thus in real time, introducing $P_0^\perp = 1-P_0$, using a Duhamel formula on $P_0^\perp M P_0^\perp = P_0^\perp \Omega P_0^\perp + P_0^\perp K_0 P_0^\perp$, we have 
    \begin{align}
        \chi(t)V_P &= -\theta(t)S_{0}\left( \Big(e^{-tJ P_0^\perp\Omega P_0^\perp}J - \int_0^t e^{-(t-s)JP_0^\perp\Omega P_0^\perp} JK_0 e^{-sJP_0^\perp( \Omega+K)P_0^\perp} \, \mathrm{d}s\Big) J  P_0^\perp V_P \Psi^0\right)\\
        & = \chi_0(t) V_P + S_0\left( \Big(\int_\R \theta(t-s) e^{-(t-s)JP_0^\perp\Omega P_0^\perp} J S_0^* \frac{dv_{\mathrm{hxc}}}{d\rho} S_0 \theta(s) e^{-sJP_0^\perp( \Omega+K)P_0^\perp} \, \mathrm{d}s\Big) J  P_0^\perp S_0^* V_P \right) \\
        & = \chi_0(t) V_P + \big(\chi_0 \star \frac{dv_{\mathrm{hxc}}}{d\rho} \chi\big)(t) V_P,
    \end{align}
    with
    $\chi_0(t) = -\theta(t)S_{0}\Big(e^{-tJ P_0^\perp\Omega P_0^\perp}J P_0^\perp V_P \Psi_0\Big)$
    which is the linear response function of the non-interacting evolution with $\Omega$ and $\star$ is the convolution in time.
\end{remark}

\subsection{Resonances}
Our second result states that any eigenvalue embedded in the
continuous spectrum of $\Omega$ turns into a resonance in the weakly
interacting regime. We need the following extra assumptions:
\begin{assumption}[Exponential decay of the total potential]\label{assumption6}
    The operator of multiplication by the total potential
    \begin{align}
      \label{eq:22}
      V_{\rm tot}= V_{\rm ext} + \left(\rho_0*\tfrac{1}{|r|}\right)+v_{\rm xc}(\rho_0)
    \end{align}
    is bounded from $H^{2}_{\beta}$ to $L^{2}_{-\beta}$ for some
    $\beta > 0$.
\end{assumption}
This assumption allows to analytically continue the resolvent beyond
the real axis in the topology of exponentially localized functions. It
is justified for atomic systems, where the rotational symmetry implies
the exponential decay of $V_{\rm ext} + \left(\rho_0*\tfrac{1}{|r|}\right)$. It does not hold for general molecules, where the algebraic
decay of the total potential is determined by the first non-zero
moment of the total charge distribution. There, it might be possible
to generalize it in two different directions. First, with less decay
assumptions, we should be able to obtain a weaker notion of resonance
\cite{orth1990quantum}. Second, with minimal decay assumptions but
assuming analyticity at infinity of $V_{\rm tot}$, it should be
possible to obtain similar results as the ones in this paper \cite{barry_dilatations} using the
theory of analytic dilatations.

\begin{assumption}[Excitation energy embedded in continuous spectrum]\label{assumption7} 
  Let $\lambda_{i_0}$ and $\lambda_{a_0}$ two eigenvalues of $H_0$, with
  ${i_0} \in \{1, ..., N\}$ an occupied orbital and ${a_0}>N$ an unoccupied
  orbital (eigenvector of $H_{0}$). We assume
  \begin{itemize}
  \item $\lambda_{{a_0}}-\lambda_{{i_0}} > -\lambda_{N}$ (excitation embedded in
    continuous spectrum)
  \item $\lambda_{{a_0}} - \lambda_{{i_0}} \neq -\lambda_{i}$ for any
    $i=1, \dots, N$ (excitation not at ionization thresholds)
  \item $\lambda_{{a_0}}$ and $\lambda_{{i_0}}$ are simple eigenvalues.
  \end{itemize}
\end{assumption}
The last assumption is not crucial, and is made to simplify the final
expression of the decay rate.


\begin{theorem}\label{th:resonances}

  Under Assumptions 1 to 7, there exists $\alpha_{0}>0$ such that, for all $0<\alpha < \alpha_{0}$, 
  $\widehat \chi$ admits a meromorphic continuation as an
  operator from $L^{2}_{\alpha}$ to $L^{2}_{-\alpha}$ from the upper
  complex plane to a complex neighborhood of $\lambda_{{a_0}}-\lambda_{{i_0}}$.

  If
  $\delta = \|K_0\|_{\mathcal{L}\left((L^2_\alpha)^{2N}, (H^2_{-\alpha})^{2N}\right)}$
  is small enough and $\psi_{i_0}\overline{\psi}_{a_0}+\psi_{a_0}\overline{\psi}_{i_0} \not=0$, this continuation admits a single pole at a location $z_{\rm pole}$ with
  \begin{align}
    \mathrm{Re}(z_{\rm pole})=\lambda_{a_0}-\lambda_{i_0} + \Delta E + O(\delta^{3}), \quad \text{and} \quad \mathrm{Im}(z_{\rm pole}) = -\Gamma+O(\delta^3),
  \end{align}
  where $\Delta E$ and $\Gamma$, of second order in $\delta$, are given by a Fermi golden rule type
  expression (see \eqref{eq:13}). In particular, $\Gamma$ is
  non-negative and generically non-zero.


\end{theorem}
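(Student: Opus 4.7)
The starting point is formula \eqref{eq:chi_frequency}, which expresses $\widehat\chi(\omega)$ as the boundary value of the operator resolvent $(M + \ii\omega J)^{-1}$ restricted to $\mathrm{Ran}(1-P_0)$. The plan is to obtain a meromorphic continuation of this resolvent from $\{\mathrm{Im}\,\omega>0\}$ across the real axis into a complex neighborhood of $\omega_0 := \lambda_{a_0} - \lambda_{i_0}$, as a bounded operator from $(L^2_\alpha)^{2N}$ to $(L^2_{-\alpha})^{2N}$ for $\alpha>0$ small, and then to locate its poles. I would split $M + \ii\omega J = (\Omega + \ii\omega J) + K_0$ (within $1-P_0$) and treat $K_0$ as a perturbation of size $\delta$.

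\textbf{Free continuation.} In the Casida representation, $\ii\omega J = \mathrm{diag}(-\omega,+\omega)$, so $\Omega + \ii\omega J$ is block-diagonal, and the problem reduces to the meromorphic continuation of scalar resolvents $(H_0 - \lambda_i \mp \omega)^{-1}$. Each continues across the positive real axis by a standard argument: the free resolvent $(-\tfrac12\Delta - \zeta)^{-1}$ extends analytically as $\zeta$ crosses $(0,\infty)$ from one Riemann sheet to the other, as a bounded operator $L^2_\alpha \to L^2_{-\alpha}$; and by Assumption \ref{assumption6}, an analytic Fredholm / Birman--Schwinger argument applied to $(-\tfrac12\Delta - \zeta)^{-1}V_\mathrm{tot}$ propagates this to $(H_0-\zeta)^{-1}$, with isolated simple poles at the eigenvalues of $H_0$. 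The only singularity of $(\Omega + \ii\omega J)^{-1}\restreinte{\mathrm{Ran}(1-P_0)}$ in a neighborhood of $\omega_0$ is thus a simple pole at $\omega_0$ coming from the embedded eigenvector $\phi_0$ associated with $\psi_{a_0}$ in the $i_0$-th sector (which lies in $\mathrm{Ran}(1-P_0)$ since $a_0>N$); isolation from the ionization thresholds $\pm\lambda_i$ and from the symmetric pole at $-\omega_0$ in the other Casida block is guaranteed by Assumption \ref{assumption7}.

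\textbf{Feshbach reduction and Fermi golden rule.} Let $Q$ be the orthogonal projector onto $\mathrm{Span}\,\phi_0$ and $Q^\perp = (1-P_0)-Q$. The Schur complement formula shows that $(M + \ii\omega J)^{-1}$ has a pole near $\omega_0$ iff the scalar effective operator
\begin{align*}
H_\mathrm{eff}(\omega) = Q(\Omega + \ii\omega J + K_0)Q - Q K_0 Q^\perp \bigl(Q^\perp(\Omega + \ii\omega J + K_0)Q^\perp\bigr)^{-1} Q^\perp K_0 Q
\end{align*}
vanishes. For $\delta$ small, the restricted resolvent $(Q^\perp(\Omega + \ii\omega J + K_0)Q^\perp)^{-1}$ exists and is holomorphic in a complex neighborhood of $\omega_0$ (the embedded mode has been removed by $Q^\perp$), by a Neumann series in $K_0$ applied to the continued free resolvent. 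Expanding to second order in $K_0$ and applying the implicit function theorem to $H_\mathrm{eff}(\omega)=0$ yields a unique root $z_\mathrm{pole}$ near $\omega_0$ with
\begin{align*}
z_\mathrm{pole} - \omega_0 = \langle\phi_0, K_0\phi_0\rangle + \langle\phi_0, K_0 Q^\perp(Q^\perp\Omega Q^\perp - \omega_0 - \ii 0^+)^{-1}Q^\perp K_0 \phi_0\rangle + O(\delta^3).
\end{align*}
The real part gives $\Delta E$; the imaginary part, using $\mathrm{Im}(x-\ii 0^+)^{-1} = \pi\delta(x)$, gives $\Gamma = \pi\langle\phi_0, K_0\,\delta(\Omega - \omega_0)\,K_0\phi_0\rangle \ge 0$. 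Since $K_0\phi_0$ is built from $\mathcal{S}_{\Psi^0}\phi_0 = \psi_{i_0}\overline{\psi_{a_0}} + \psi_{a_0}\overline{\psi_{i_0}}$, the non-degeneracy assumption ensures that this overlap with the continuum of $H_0$ at $\lambda_{a_0}$ is generically non-zero, hence $\Gamma>0$.

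\textbf{Main obstacle.} The hard part is to execute the meromorphic continuation, the Feshbach reduction and the perturbative expansion \emph{simultaneously} in the weighted topology $L^2_\alpha \to L^2_{-\alpha}$, respecting the non-self-adjoint structure of $JM$, the Casida block coupling introduced by the real-linear part of $K_0$, and the projection $1-P_0$. In particular, one must verify that the Neumann series for the $Q^\perp$-resolvent converges uniformly on a complex neighborhood of $\omega_0$ that is independent of $\delta$, and that $Q^\perp(\Omega+\ii\omega J)Q^\perp$ has no other singularities there --- this is exactly the role of the non-coincidence clauses in Assumption \ref{assumption7}.
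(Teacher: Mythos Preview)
Your approach is essentially the paper's: analytic continuation of the free resolvent in weighted spaces, an analytic Fredholm argument to pass to the full operator, a Schur/Feshbach reduction onto the embedded mode $U_{i_0\to a_0}$, the implicit function theorem to locate the zero of the scalar Schur complement, and Sokhotski--Plemelj to read off $\Gamma\ge 0$. The formulas you obtain match the paper's \eqref{eq:13}--\eqref{eq:15}.

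One structural difference worth noting: you first continue $(\Omega+\ii\omega J)^{-1}$ (free resolvent $+$ Fredholm in $V_{\rm tot}$) and then add $K_0$ perturbatively via a Neumann series on the $Q^\perp$ block. The paper instead groups $V_{\rm tot}+K_0$ together and applies analytic Fredholm in one shot to $1+(V_{\rm tot}+K_0)(-\tfrac12\Delta-\Lambda+\ii zJ)^{-1}$. The payoff of the paper's route is that the \emph{first} assertion of the theorem---meromorphic continuation of $\widehat\chi$ near $\omega_0$---holds without any smallness assumption on $K_0$, whereas your Neumann-series argument for the $Q^\perp$ resolvent a priori needs $\delta$ small already at that stage. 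This is easily repaired in your framework (since $K_0$ is compact $H^2_{-\alpha}\to L^2_\alpha$ by the exponential decay of $\Psi^0$, Fredholm applies equally well), but as written your proof of the first part is slightly less general than the statement claims.
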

This pole can be interpreted in several ways. In the linear response
function $\widehat \chi(\omega)$ for $\omega$ real, it corresponds to
a Breit-Wigner bump at frequency ${\rm Re}(z_{\rm pole})$ and with
width $-{{\rm Im}(z_{\rm pole})}$; see for instance
\cite{toulouse}. Dynamically, it corresponds to a long-lived state with
lifetime $-\frac{1}{{\rm Im}(z_{\rm pole})}$; see for instance
\cite{dyatlov_mathematical_2019,orth1990quantum}.


\section{Well-posedness and linear response} \label{sec_well-posedness}
In this Section, we prove Theorem \ref{theorem1}.

\subsection{The linearized equation}\label{sec:linearized_equation}
We will do this by
studying the linearized equation
\begin{align}
  \label{eq:23}
  J \partial_{t} U &= M_{\rm dyn} U, \quad U(0) = U_{0}.
\end{align}
\begin{lemma}[Well-posedness of the linearized equation]\label{lemma_propagator}~
  Suppose that $e^{-JMt}$ defines a uniformly bounded semigroup in $(H^2)^{2N}$, then
  for all $U_{0} \in (H^{2})^{2N}$, there exists a unique solution
  \begin{align}
    \label{eq:24}
    U(t) = e^{-JM_{\rm dyn} t} U_{0}
  \end{align}
  of \eqref{eq:23}, satisfying
  \begin{align}
    \label{eq:25}
    \|U(t)\|_{(H^{2})^{2N}} \le C_{\rm dyn} (1+t^2) \|U_{0}\|_{(H^{2})^{2N}}
  \end{align}
  for some $C_{\rm dyn} > 0$ and for all $t \in \R$.

  If furthermore $U_{0} \in {\rm Ran}(1-P_{0})$, then
  \begin{align}
    \label{eq:27}
    \|U(t)\|_{(H^{2})^{2N}} \le C \|U_{0}\|_{(H^{2})^{2N}}
  \end{align}
  for some $C > 0$ and for all $t \in \R$.
\end{lemma}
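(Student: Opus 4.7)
The plan is to exploit the finite-dimensional projection $P_0$ together with the assumed boundedness of $e^{-tJM}$. Writing $U = V + W$ with $V = (1-P_0)U$ and $W = P_0 U$, and using that $J$ commutes with $P_0$ (visible in the Casida representation, where $P_0$ is $\C$-linear), the equation splits into the coupled system
\begin{align*}
  J\partial_t V &= MV + BW,\\
  J\partial_t W &= AW + B^{*}V,
\end{align*}
with $A = P_0 M_{\rm dyn} P_0$, $B = (1-P_0) M_{\rm dyn} P_0$ and $B^{*} = P_0 M_{\rm dyn}(1-P_0)$ (using self-adjointness of $M_{\rm dyn}$). The first equation lives on the infinite-dimensional space ${\rm Ran}(1-P_0)$ and is treated via the assumed semigroup $e^{-tJM}$; the second is a finite-dimensional ODE on ${\rm Ran}(P_0)$.

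For existence and uniqueness I would recast the system in Duhamel form and run a Picard iteration in $\mathcal{C}([0,T],(H^2)^{2N})$ for any $T>0$, which closes easily because $B$ and $B^{*}$ are bounded from $H^2$ to $H^2$ and ${\rm Ran}(P_0)$ is finite-dimensional. The polynomial bound $\|U(t)\|_{(H^2)^{2N}} \le C_{\rm dyn}(1+t^2)\|U_0\|_{(H^2)^{2N}}$ then follows from a bootstrap argument: the finite-dimensional operator $JA$ has at worst Jordan blocks of size two at the eigenvalue $0$ coming from the gauge degeneracies of $\mathcal{E}$ at $\Psi^{0}$, so that $\|e^{-tJA}\| \lesssim 1+t$; Duhamel applied to $W$ with forcing by a bounded $V$ gives $\|W(t)\| \lesssim 1+t$, and inserting this back in the Duhamel formula for $V$ against the uniformly bounded $e^{-tJM}$ produces $\|V(t)\| \lesssim 1+t^2$.

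The genuinely delicate part is the uniform bound when $U_0 \in {\rm Ran}(1-P_0)$. The key ingredient is a symplectic conservation law: for any $U_{\star} \in \ker M_{\rm dyn}$, a direct computation using $J^{*}=-J$ and $M_{\rm dyn}U_\star=0$ gives $\frac{d}{dt}\langle U(t), JU_{\star}\rangle = -\langle U(t), M_{\rm dyn}U_{\star}\rangle = 0$. Since the gauge null directions satisfy $\ker M_{\rm dyn}\subset{\rm Ran}(P_0)$ and $J$ preserves ${\rm Ran}(P_0)$, the initial pairing $\langle U_0, JU_{\star}\rangle$ vanishes when $U_0 \in {\rm Ran}(1-P_0)$, so $U(t) \perp J\ker M_{\rm dyn}$ throughout the evolution. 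Combined with the conservation of the Hamiltonian $\langle U(t), M_{\rm dyn}U(t)\rangle = \langle U_0, MU_0\rangle \le C\|U_0\|_{H^2}^{2}$ and the coercivity of Proposition~\ref{proposition:operator_M_dyn}, this constrains the flow to the symplectic-orthogonal complement of $\ker M_{\rm dyn}$, on which $-JM_{\rm dyn}$ generates a uniformly bounded (rather than polynomially growing) semigroup. The main obstacle is to convert this pair of conservation laws into a clean uniform $H^2$ bound on $U(t)$ itself; this boils down to a finite-dimensional analysis of the Schur complement $A - B^{*}M^{-1}B$ on ${\rm Ran}(P_0)$, showing that the component of $U(t)$ that would live in the generalized eigenspace of $JM_{\rm dyn}$ at $0$ (the source of the polynomial growth in the general case) is precisely ruled out by the symplectic constraint, leaving only oscillatory modes with frequencies bounded away from $0$ whose time integral against the coupling $B^{*}V$ remains uniformly bounded.
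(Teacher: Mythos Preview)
Your two–block splitting $U=(1-P_0)U+P_0U$ is too coarse, and this is where the bootstrap breaks. In your coupled system both $B=(1-P_0)M_{\rm dyn}P_0$ and $B^{*}=P_0M_{\rm dyn}(1-P_0)$ are nonzero, so $V$ and $W$ genuinely feed back into each other. The step ``Duhamel applied to $W$ with forcing by a bounded $V$ gives $\|W(t)\|\lesssim 1+t$, and inserting this back gives $\|V(t)\|\lesssim 1+t^{2}$'' is circular: you have not yet shown $V$ is bounded. Iterating the two Duhamel inequalities for a coupled system only yields an exponential bound, not a polynomial one. A related issue is your claim that $-JA$ has at worst size-two Jordan blocks: since $A=P_0M_{\rm dyn}P_0$ is self-adjoint but \emph{indefinite} on ${\rm Ran}(P_0)$, it is not a priori clear that the spectrum of $-JA$ is purely imaginary; this too needs the finer structure below.

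The paper resolves both problems by splitting ${\rm Ran}(P_0)$ further into $Y_{\rm S}=\{\Psi^{0}S:S=S^{*}\}$ and $Y_{\rm A}=\{\Psi^{0}A:A=-A^{*}\}$. The key structural facts are: (i) $S_0$ vanishes on $Y_{\rm A}$ (gauge modes carry no density), hence $K_0 Y_{\rm A}=0$ and ${\rm Ran}(K_0)\perp Y_{\rm A}$; (ii) $J$ swaps $Y_{\rm S}\leftrightarrow Y_{\rm A}$ and preserves $Y_\perp$; (iii) $\Omega$ swaps $Y_{\rm S}\leftrightarrow Y_{\rm A}$. Together these make $-JM_{\rm dyn}$ block \emph{lower-triangular} in the order $(S,\perp,A)$: the $U_{\rm S}$ component evolves autonomously under the unitary $e^{-J\Omega t}$, $U_\perp$ is driven only by $U_{\rm S}$, and $U_{\rm A}$ is driven by $U_{\rm S}$ and $U_\perp$. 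Iterated Duhamel then gives the $(1+t^{2})$ bound with no circularity, and your claim about $-JA$ follows because in this basis $-JA$ is triangular with skew-adjoint diagonal blocks. In effect the feedback loop you are fighting is already broken---the coupling $B^{*}V$ lands only in $Y_{\rm A}$, while $BW$ sees only $W_{\rm S}$---but only the $(S,A)$ split makes this visible. For $U_0\in Y_\perp$ one then has $U_{\rm S}\equiv 0$, so $U_\perp(t)=e^{-JMt}U_0$ is uniformly bounded by hypothesis; your symplectic and Hamiltonian conservation laws are not needed.
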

\begin{proof}
  
To prove this lemma, we look at the structure of orbital variations
$U$. Any orbital variation $U \in \realvs = (L^{2}(\R^{3}, \C)^{N}, \R)$ can
be decomposed orthogonally as
\begin{align}
  \label{eq:28}
  U = U_{\rm S} + U_{\rm A} + U_{\perp},
\end{align}
where
\begin{align}
  \label{eq:29}
  U_{\rm S} = \Psi^{0} S, \quad U_{\rm A} = \Psi^{0} A, \quad U_{\perp} \in {\rm Ran}(1-P_{0})
\end{align}
with $S$ a $N \times N$ Hermitian matrix (explicitly:
$S=S_{\rm r}+\jj S_{\rm j}$ satisfies
$S_{\rm r}^{T} = S_{\rm r}, S_{\rm j}^{T} = -S_{\rm j}$) and $A$ a skew-Hermitian matrix.
This corresponds to an orthogonal splitting of the space $Y_{\R}$ as
\begin{align}
  \label{eq:30}
  Y_{\R} = Y_{\R,{\rm S}} + Y_{\R,{\rm A}} + Y_{\R, \perp}
\end{align}
which induces an orthogonal splitting of $Y = (L^{2})^{2N}$.
\begin{align}
  Y = Y_{\rm S} + Y_{\rm A} + Y_{\perp}.
\end{align}

The variations in $Y_{\rm S}$ are ``growth modes'': they violate the
normalization condition $\Psi^{*} \Psi = 1$, whose tangent space is
$Y_{\rm A} + Y_{\perp}$. This normalization condition is preserved by
the flow of the nonlinear equation, and therefore its tangent space is
preserved by the linearized equation:
\begin{align}
  \label{eq:33}
  (-J M_{\rm dyn} U)_{\rm S} = 0 \quad \forall U \in Y_{\rm A} + Y_{\perp}.
\end{align}
The variations in
$Y_{\rm A}$ are ``gauge modes'', that correspond to a rotation of the
orbitals between themselves and therefore produce no observable
physical output
\begin{align}
  \label{eq:31}
  &S_{0}(U_{\rm A}) = 0 \quad  \forall U_{\rm A} \in Y_{\rm A},
\end{align}
so that in particular $K_{0} Y_{\rm A} = 0$. By self-adjointness,
${\rm Ran}(K_{0}) \perp Y_{\rm A}$.
Finally, for any matrix $Q$
\begin{align}
  \label{eq:34}
  \Omega (\Psi^{0} Q) = \Psi^{0} Q', \quad \text{ with } Q'_{ij} = (\lambda_{i}-\lambda_{j})Q_{ij},
\end{align}
so that we have the following sparsity patterns:
\begin{align*}
  J =  \begin{pmatrix}
    0&J&0\\
    J&0&0\\
    0&0&J
  \end{pmatrix}, \quad
  \Omega = 
  \begin{pmatrix}
    0&\ast&0\\
    \ast&0&0\\
    0&0&\ast
  \end{pmatrix}, \quad
K_{0} =
  \begin{pmatrix}
    \ast&0&\ast\\
    0&0&0\\
    \ast&0&\ast
  \end{pmatrix}.
\end{align*}

Altogether, \eqref{eq:23} can be rewritten as
\begin{align}
  \label{eq:36}
  \partial_{t} U &=
  \begin{pmatrix}
    -J \Omega&0&0\\
    L_{\rm AS}&-J \Omega&L_{A\perp}\\
    L_{\rm \perp S}&0&-JM
  \end{pmatrix} U,
\end{align}
where $L_{\rm AS}, L_{\rm A\perp}$ and $L_{\perp S}$ are blocks of
$-J K_{0}$, whose expression is not relevant (they are bounded, since
they have either a starting or ending space of finite dimension). The
solution can therefore be obtained formally as
\begin{align*}
  U_{\rm S}(t) &= e^{-J \Omega t} U_{\rm S}(0)\\
  U_{\perp}(t) &= e^{-JMt} U_{\perp}(0) + \int_{0}^{t} e^{-JM(t-t')} L_{\perp \rm S} U_{\rm S}(t') dt'\\
  U_{\rm A}(t) &= e^{C t} U_{\rm A}(0)  +\int_{0}^{t} e^{-J\Omega(t-t')} (L_{\rm AS} U_{\rm S}(t') + L_{\rm A \perp} U_{\perp}(t')) dt'
\end{align*}
Since
\begin{align*}
  J \Omega \simeqcasida
  \begin{pmatrix}
    \ii \Omega &0\\0&-\ii\Omega
  \end{pmatrix},
\end{align*}
$e^{-J\Omega t}$ is unitary on the finite dimensional spaces $Y_{\rm S}$
and $Y_{\rm A}$ in the $L^2$ topology for all
$t \in \R$,
and therefore $e^{-J\Omega t}$ is bounded uniformly in time in the
$H^{2}$ topology. Our task is therefore now to give a sense and obtain a uniform bound in time for
$e^{-JMt}$ in the $H^2$ topology of $Y_{\perp}$, from which Lemma
\ref{lemma_propagator} follows immediately.
\end{proof}

The main difficulty to study $e^{-JMt}$ is that $-JM$ is not
skew-adjoint because $J$ does not commute with $K_{0}$. This prevents
us from using the self-adjoint functional calculus. However,
since $M$ is positive, we can make use of the following formal equality
\begin{align*}
  e^{-JMt} = M^{-1/2}e^{-M^{1/2} J M^{1/2} t}M^{1/2}
\end{align*}
where now $M^{1/2} J M^{1/2}$ is skew-adjoint. Lemma
\ref{lemma_propagator} then follows from the following technical lemma:

\begin{lemma}[Properties of $e^{-tM^{1/2}JM^{1/2}}$]~
  \label{lemma:boundedness_propagator}
  
    \begin{enumerate}
        \item $x \mapsto \|Mx\|_{L^2}$ is an equivalent norm to $H^2$ on $\mathrm{Ran}(1-P_0)$;
%
      \item For any $t \in \R$, $e^{-tM^{1/2}JM^{1/2}}$ is unitary on $(L^2)^{2N}$;
      \item For $t \in \R$, $M^{-1/2}e^{-tM^{1/2}JM^{1/2}}M^{-1/2}$ as an operator on $(H^2)^{2N}$ is uniformly bounded, \emph{i.e.} there is a constant $C>0$ such that for all $t \in \R$
      \[
        \big \|M^{-1/2}e^{-tM^{1/2} J M^{1/2}}M^{1/2}\big \|_{\mathcal{L}\left((H^2)^{2N},(H^2)^{2N}\right) } \leq C;
       \] 
      \item  For any $\Phi_0 \in (H^2)^{2N}$, let $\Phi(t) = M^{-1/2}e^{-tM^{1/2}JM^{1/2}}M^{1/2}\Phi_0$. 
       Then $\Phi \in C^0(\mathbb{R},(H^2)^{2N}) \cap C^1(\mathbb{R},(L^2)^{2N})$ solves $\Phi'(t) =- JM \Phi(t)$.
    \end{enumerate}
\end{lemma}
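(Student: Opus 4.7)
The plan is to prove the four claims in order, restricting throughout to $\mathrm{Ran}(1-P_0)$ where $M$ is coercive (on $\mathrm{Ran}(P_0)$ every operator in sight is zero). For (1), I would combine Proposition~\ref{proposition:operator_M_dyn}, which gives $M \ge \gamma I$ (hence $\|u\|_{L^2} \le \gamma^{-1}\|Mu\|_{L^2}$ by Cauchy-Schwarz), with the decomposition $M = (1-P_0)(-\tfrac12\Delta + V_\mathrm{tot} - \Lambda + K_0)(1-P_0)$: since $V_\mathrm{tot}$ and $K_0$ are $\Delta$-compact (Lemma~\ref{lem:S_et_K}), hence $\Delta$-bounded with arbitrarily small relative bound, and $\|P_0 \Delta u\|_{L^2} \le C\|u\|_{L^2}$ thanks to $\psi_i^0 \in H^2$, an absorption argument gives $\|\Delta u\|_{L^2} \lesssim \|Mu\|_{L^2}$ and thus $\|u\|_{H^2} \asymp \|Mu\|_{L^2}$. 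For (2), $A := M^{1/2}JM^{1/2}$ is densely defined (its operator domain contains $H^2\cap\mathrm{Ran}(1-P_0)$, since $J$ preserves $H^1 = \mathrm{Dom}(M^{1/2})$); and the quadratic form $q(\eta) = \langle M^{1/2}\eta, (-\ii J)M^{1/2}\eta\rangle$ on $H^1$ is bounded in absolute value by $\langle\eta, M\eta\rangle$, so $M \pm \ii A$ are nonnegative closed forms representing self-adjoint operators, whence $-\ii A$ is self-adjoint and $A$ skew-adjoint. Stone's theorem then produces the strongly continuous unitary group $e^{-tA}$.

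The crux is (3). Using $JMJ = -M + J[M,J]$ and noting that $\Omega$ commutes with $J$ (so $[M,J] = (1-P_0)[K_0,J](1-P_0)$), one derives the identity
\[
  -A^2 = M^2 - M^{1/2}J[M,J]M^{1/2}.
\]
Because $K_0 = S_0^*\tfrac{dv_\mathrm{hxc}}{d\rho}S_0$ is $L^2$-bounded (via the regularity of $\Psi^0$ and HLS for the Coulomb convolution), $[M,J]$ is bounded on $L^2$. Combining with $\|M^{1/2}\eta\|_{L^2}^2 = \langle\eta,M\eta\rangle \le \|\eta\|_{L^2}\|M\eta\|_{L^2}$ and a Young-type inequality yields the graph-norm equivalence
\[
  \|A\eta\|_{L^2} + \|\eta\|_{L^2}\ \asymp\ \|M\eta\|_{L^2} + \|\eta\|_{L^2}\ \asymp\ \|\eta\|_{H^2}
\]
on $\mathrm{Dom}(A) = H^2\cap\mathrm{Ran}(1-P_0)$. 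Since $e^{-tA}$ commutes with $A$ and is unitary on $L^2$, both $\|\eta\|_{L^2}$ and $\|A\eta\|_{L^2}$ are conserved, so $e^{-tA}$ is uniformly $H^2$-bounded in $t$; complex interpolation with the $L^2$-isometry gives uniform boundedness on $H^1 = [L^2, H^2]_{1/2}$. Part (1) upgrades to topological isomorphisms $M^{1/2}: H^2 \to H^1$ and $M^{-1/2}: H^1 \to H^2$ on $\mathrm{Ran}(1-P_0)$ (check $\|M^{1/2}\eta\|_{H^1}^2 \asymp \|M\eta\|_{L^2}^2$), so the factorization
\[
  e^{-tJM} = M^{-1/2}e^{-tA}M^{1/2}\ :\ H^2 \xrightarrow{M^{1/2}} H^1 \xrightarrow{e^{-tA}} H^1 \xrightarrow{M^{-1/2}} H^2
\]
is uniformly bounded in $t$.

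Claim (4) then follows: strong continuity of $e^{-tA}$ on $L^2$ (Stone) transports through the $M^{\pm 1/2}$ isomorphisms to give $\Phi \in C^0(\R, (H^2)^{2N})$, and since $JM: H^2 \to L^2$ is bounded, $\Phi'(t) = -JM\Phi(t)$ holds in $L^2$, giving $\Phi \in C^1(\R, (L^2)^{2N})$. The main obstacle is that $-JM$ is \emph{not} skew-adjoint --- because $K_0$ does not commute with $J$ --- so $e^{-tJM}$ is not $L^2$-unitary; a direct Gr\"onwall argument on $\|M\Phi(t)\|_{L^2}^2$ only yields an exponential-in-$t$ bound. The algebraic identity for $-A^2$, which turns the non-unitarity of $e^{-tJM}$ into a bounded perturbation of the equivalence between $\|A\cdot\|$ and $\|M\cdot\|$, is what rescues a \emph{uniform}-in-$t$ estimate.
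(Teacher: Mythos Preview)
Your proof is correct, and Parts 1, 2, 4 proceed essentially as in the paper. The substantive difference is Part 3. The paper first proves directly (by a second compactness/relative-boundedness argument parallel to Part 1) that $\|M^{1/2}\eta\|_{L^2}\asymp\|\eta\|_{H^1}$ on $\mathrm{Ran}(1-P_0)$, and then composes isomorphisms: since $M^{1/2}:H^1\to L^2$ and $J:H^1\to H^1$ are isomorphisms, $\|A\eta\|_{L^2}=\|M^{1/2}JM^{1/2}\eta\|_{L^2}\asymp\|M^{1/2}\eta\|_{H^1}\asymp\|M\eta\|_{L^2}\asymp\|\eta\|_{H^2}$. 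You instead use the algebraic identity $-A^2=M^2-M^{1/2}J[M,J]M^{1/2}$ together with the $L^2$-boundedness of $[K_0,J]$ to compare $\|A\eta\|^2$ with $\|M\eta\|^2$ directly, bypassing the $H^1$ argument. Your route is slick and exploits more structure (that only the bounded piece $K_0$ fails to commute with $J$); note however that it uses the fact that $K_0$ is bounded $L^2\to L^2$, which is true (split $\tfrac{1}{|r|}$ into its $L^2$ and $L^\infty$ parts and use $\psi_i^0\in L^2\cap L^\infty$) but stronger than what the paper records in Lemma~\ref{lem:S_et_K}. Conversely, the paper's approach gives the $M^{1/2}\!\sim H^1$ equivalence explicitly, whereas you need it at the very end for the factorization $M^{-1/2}e^{-tA}M^{1/2}:H^2\to H^1\to H^1\to H^2$; your ``check $\|M^{1/2}\eta\|_{H^1}^2\asymp\|M\eta\|_{L^2}^2$'' tacitly relies on $\mathrm{Dom}(M^{1/2})=[L^2,\mathrm{Dom}(M)]_{1/2}=H^1$, which is standard interpolation for positive self-adjoint operators but deserves a sentence.
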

\begin{proof}[Proof of Lemma \ref{lemma:boundedness_propagator}]~
\begin{enumerate}

    \item \textbf{$M$ induces a norm equivalent to the $H^2$ norm on ${\rm Ran}\left(1-P_0\right)$}
    
  Let us first notice that $K_0$ is bounded from $(H^2)^{2N}$ to itself.  By Lemma \ref{lemma_stabilite}, $\Uperp \mapsto \left(S_0(\Uperp)* \frac{1}{|r|}\right)\Psi^0 \in \mathcal{L}(H^2, H^2)$, and $v_{\rm xc}'(\rho_0)$ is bounded because $v_{\rm xc}'$ is continuous.

    Let $\Uperp$ in $\big(H^2\big)^{2N}\cap {\rm Ran}\left(1-P_0\right)$.
    \begin{align}
        \|\Uperp\|_{H^2}&= \|\Uperp\|_{L^2}+ \|\Delta \Uperp\|_{L^2}\\
        \| M \Uperp\|_{L^2}&= \| M(-\Delta+1)^{-1}(-\Delta+1)\Uperp\|_{L^2}\\
        &\leq \| M (-\Delta+1)^{-1}\|_{\mathcal{L}\left((L^2)^{2N}, (L^2)^{2N}\right)} \|\Uperp\|_{H^2}
    \end{align}

    $(\Omega + K_0)(-\Delta+1)^{-1}$ is bounded because $V_\mathrm{tot}+K_0$ is a $\Delta$-compact operator.
    Therefore the $H^2$ norm dominates $ M $ on ${\rm Ran}\left(1-P_0\right)$.
    We now prove the reverse statement. We notice that $V_\mathrm{tot}+K_0$ is a $\Delta$-compact operator, thus $\Delta$-infinitesimally bounded:
    \begin{align}
        \forall \eta >0, ~\exists C_{\eta}>0 ~&:~ \forall \Uperp \in \big(H^2\big)^{2N}\cap {\rm Ran}\left(1-P_0\right), \\
         \|(V_\mathrm{tot}+K_0)\Uperp\|_{L^2} &\leq \eta \|\Delta \Uperp\|_{L^2}+C_{\eta}\|\Uperp\|_{L^2} \\
        \| M \Uperp\|_{L^2}& \geq \|(-\frac{1}{2} \Delta -\Lambda)\Uperp\|_{L^2} -\|(V_\mathrm{tot}+K_0)\Uperp\|_{L^2}\\
        \| M \Uperp\|_{L^2}& \geq (\tfrac{1}{2}-\eta) \|\Delta \Uperp\|_{L^2}- (C_{\eta}+ \|\Lambda\|_{\mathcal{L}\left((L^2)^{2N} , (L^2)^{2N}\right)})\|\Uperp\|_{L^2} \label{eq:domination_1}
    \end{align}
    Furthermore, $ M $ is $L^2$-coercive: 
    \begin{align}
        \exists c >0 , ~ \forall \Uperp \in \big(H^2\big)^{2N}\cap {\rm Ran}\left(1-P_0\right), ~ \| M  \Uperp\|_{L^2} \geq c\|\Uperp\|_{L^2}  \label{eq:domination_2}
    \end{align}
    
    Combining equations \eqref{eq:domination_1}-\eqref{eq:domination_2} allows to conclude that $M$ dominates the $H^2$ norm on ${\rm Ran}\left(1-P_0\right)$. Thus the $M$ norm and the $H^2$ norm are equivalent on $\mathrm{Ran}(1-P_0)$.

%
%
   
    \item \textbf{$e^{-M^{1/2} J M^{1/2}}$ is unitary on $\big(L^2\big)^{2N}$ }
    
    
    $M^{1/2} J M^{1/2}$ is a skew-adjoint operator on $\big(L^2\big)^{2N}$, with domain $\big(H^2\big)^{2N}$. We can exponentiate it, and:
    \begin{align}
        e^{-M^{1/2} J M^{1/2}} \in \mathcal{L}\left(\big(L^2\big)^{2N},\big(L^2\big)^{2N}\right)
    \end{align}
    $e^{-M^{1/2} J M^{1/2}}$ is a unitary operator (and for any $t \in \mathbb{R}$,  $e^{-tM^{1/2} J M^{1/2}}$ is unitary).

    \item \textbf{$M^{-1/2}e^{-tM^{1/2} J M^{1/2}}M^{1/2}$ is uniformly bounded in time from $\big(H^2\big)^{2N}$ to itself}

    The proof is carried out in two steps. First we show that the $H^1$ norm is equivalent to the norm induced by $M^{1/2}$ on  ${\rm Ran}\left(1-P_0\right)$. Let $\Uperp$ in $(H^1)^{2N} \cap  {\rm Ran}\left(1-P_0\right)$:
    \begin{align}
        \|M^{1/2} \Uperp\|_{L^2}^2 &= \langle \Uperp, M  \Uperp\rangle_{L^2} \\
        &= \langle \Uperp, (-\frac{1}{2}\Delta- \Lambda + V_\mathrm{tot} + K) \Uperp \rangle_{L^2} \\
        &= \langle (-\frac{1}{2}\Delta- \Lambda)^{1/2}\Uperp, (1 + (-\frac{1}{2}\Delta- \Lambda)^{-1/2}(V_\mathrm{tot} + K)(-\frac{1}{2}\Delta- \Lambda)^{-1/2}) (-\frac{1}{2}\Delta- \Lambda)^{1/2}\Uperp \rangle_{L^2} \\
        &= \frac{1}{2} \langle \nabla \Uperp, \nabla \Uperp \rangle_{L^2} + \langle \Uperp, -\Lambda \Uperp \rangle_{L^2} \\
        & \qquad \qquad+ \langle (-\frac{1}{2}\Delta- \Lambda)^{1/2}\Uperp, (-\frac{1}{2}\Delta- \Lambda)^{-1/2}(V_\mathrm{tot} + K)(-\frac{1}{2}\Delta- \Lambda)^{-1/2})(-\frac{1}{2}\Delta- \Lambda)^{1/2}\Uperp \rangle.
    \end{align}
    Since $((-\frac{1}{2}\Delta- \Lambda)^{-1/2}(V_\mathrm{tot} + K)(-\frac{1}{2}\Delta- \Lambda)^{-1/2})^2$ is compact on $L^2$ and $(V_\mathrm{tot} + K)(-\frac{1}{2}\Delta- \Lambda)^{-1/2})^2$ is bounded from $(L^2)^{2N}$ to $(L^2)^{2N}$ by Lemma~\ref{lem:S_et_K}, then $(-\frac{1}{2}\Delta- \Lambda)^{-1/2}(V_\mathrm{tot} + K)(-\frac{1}{2}\Delta- \Lambda)^{-1/2}$ is compact on $L^2$.
    Thus, for any $\eta>0$, there is $C_\eta>0$, such that for all $U \in (L^2)^{2N} \cap \mathrm{Ran}(1-P_0)$
    \[
        \big| \langle U, (-\frac{1}{2}\Delta- \Lambda)^{-1/2}(V_\mathrm{tot} + K)(-\frac{1}{2}\Delta- \Lambda)^{-1/2})U \rangle_{L^2} \big| \leq \eta \langle U, U \rangle_{L^2} + C_\eta \langle (-\frac{1}{2}\Delta- \Lambda)^{-1/2}U, (-\frac{1}{2}\Delta- \Lambda)^{-1/2}U \rangle_{L^2}.
    \]
    Plugging in the previous equation, we have 
    \[
        \|M^{1/2} \Uperp\|_{L^2}^2 \leq \left(\frac{1}{2}-\eta\right) \Big( \langle \nabla \Uperp, \nabla \Uperp \rangle_{L^2} + \langle \Uperp, -\Lambda \Uperp \rangle_{L^2} \Big) - C_\eta \langle \Uperp, \Uperp \rangle_{L^2}.
    \]
%
    Using this equation with $\eta$ small enough, and the $L^2$ coercivity of $M^{1/2}$, the same reasoning as before allows to conclude that $M^{1/2}$ is equivalent to the $H^1$ norm on ${\rm Ran}\left(1-P_0\right)$. 
     

    We have seen that $M$ induces a norm equivalent to $H^2$ and $M^{1/2}$ to $H^1$. Since $J$ is unitary on $H^1$, $M^{1/2}JM^{1/2}$ induces a norm equivalent to the $H^2$ norm. Since it commutes with $e^{-tM^{1/2}JM^{1/2}}$,  $e^{-tM^{1/2}JM^{1/2}}$ is a uniformly bounded operator from $(H^2)^{2N}$ to  $(H^2)^{2N}$.

    \item \textbf{$M^{-1/2}e^{-M^{1/2} J M^{1/2} t}M^{1/2}$ is the semigroup associated to $\Phi' = -JM \Phi$}
  
    We first prove that $e^{-M^{1/2} J M^{1/2} t}$ is a bounded operator on $(H^1)^{2N}$.
    Since $e^{-tM^{1/2} J M^{1/2}}$ is bounded as an operator on $L^{2}$
    and $H^{2}$, by a Riesz-Thorin type interpolation argument \cite[Theorems 7.1,
    5.1]{Lions1972}, we have the $H^{1}$ bound
    \begin{align}
      \Big\|e^{-tM^{1/2} J M^{1/2}}\Big\|_{\mathcal{L}\left((H^1)^{2N}, (H^1)^{2N}\right)}\leq  \Big\|e^{-tM^{1/2} J M^{1/2}}\Big\|_{\mathcal{L}\left((L^2)^{2N}, (L^2)^{2N}\right)}^{1/2}\Big\|e^{-tM^{1/2} J M^{1/2}}\Big\|_{\mathcal{L}\left((H^2)^{2N}, (H^2)^{2N}\right)}^{1/2}.
    \end{align}
    Now for $\Phi_0 \in (H^2)^{2N}$, let $\Phi(t) = M^{-1/2}e^{-M^{1/2} J M^{1/2} t}M^{1/2}\Phi_0$.
    Then $$\Phi'(t) = -JM M^{-1/2}e^{-M^{1/2} J M^{1/2} t}M^{1/2}\Phi_0 = -JM \Phi(t),$$
    where we have used that $M^{1/2} : (H^1)^{2N} \to (L^2)^{2N}$, $M^{-1/2} : (H^1)^{2N} \to (H^2)^{2N}$ are bounded on ${\rm Ran}(1-P_0)$ and extended to ${\rm Ran}(P_0)$ by the identify.
    \end{enumerate}
\end{proof}






\subsection{Well-posedness of the nonlinear system}

In this section, we prove that equation \eqref{eq:time_evolution_splitted} is well-posed.
By the Duhamel formula, any solution of
\eqref{eq:time_evolution_splitted} is also a solution of
\begin{align}
  \label{eq:Duhamel}
  U(t) &= - \int_0^t e^{-JM_{\rm dyn}(t-t')}J \left(f(t') V_P\Psi^0 + R(U, \varepsilon, t')\right) dt'.
\end{align}
Let
\begin{align}
  \label{eq:7}
  F(\varepsilon,U)(t) = U(t) + \int_0^t e^{-JM_{\rm dyn}(t-t')} J \left( f(t') V_P\Psi^0 + R(U, \varepsilon, t') \right) dt'.
\end{align}
We solve the equation $F(\varepsilon,U)=0$ in the Banach space
$\mathcal{C}^0\left([0, T], \big(H^2\big)^{2N}\right)$ for a fixed
$T$.
We first prove that $F$ is $\mathcal C^{1}$ on
$\R \times \mathcal C^{0}([0,T], (H^{2})^{2N})$. 

Recall that:
\begin{align}
  \label{eq:2}
  {R}(U, \varepsilon, t)&= \varepsilon f(t) V_P  U+ \epsilon^{-1}\left(H[\rho_{\Psi^{0}+\varepsilon U}]- H[\rho_0]\right) \Psi^{0}- K_0(U)+ \left(H[\rho_{\Psi^{0}+\varepsilon U}]- H[\rho_0]\right) U
\end{align}

By assumption on $V_P$ and Lemma~\ref{lem:S_et_K}, $U \mapsto \epsilon f(t)V_{P} U -K_0(U)$ is linear and maps $(H^{2})^{2N}$ to itself.
The mapping $\rho \mapsto H[\rho]- H[\rho_0]=v_{\rm hxc}(\rho)-v_{\rm hxc}(\rho_0)$ is $\mathcal{C}^1$ from $L^1\cap H^2$ to $\mathcal{L}((H^2)^{2N}, (H^2)^{2N})$ because of Assumption \ref{assumption2}, and the density mapping $\Psi \mapsto \rho_\Psi$ is $\mathcal{C}^\infty$ from $(H^2)^{2N}$ to $L^1\cap H^2$. 
This shows that $R$ is $\mathcal{C}^1$ in $U$ in the $(H^{2})^{2N}$ topology. 

Since $e^{-JM_{\rm dyn}t}$ is
bounded as an operator from $(H^{2})^{2N}$ to itself uniformly in
$t \in [0,T]$, $F$ is $\mathcal C^{1}$ on
 $\R \times \mathcal C^{0}([0,T], (H^{2})^{2N})$. 
 
 Furthermore, $\|(H[\rho_{\Psi^0+\varepsilon U}]-H[\rho_0])\Psi^0 - \varepsilon K_0(U)\|_{H^2} = O(\varepsilon^2 \|U\|_{H^2}^2) $ because $v_{xc}$ is $\mathcal{C}^1$. 
Thus $F(\varepsilon,U)(t) = U(t) + \int_{0}^{t} e^{-JM_{\rm dyn}(t-t')} J \left( f(t') V_P\Psi^0 \right) dt' + \mathcal{O}(\varepsilon \|U\|^2_{H^2})$. 
This shows that:
\begin{align}
  \label{eq:5}
  F(0,0) &= 0\\
  \frac{\partial F}{\partial U}(0,0) &= {\rm Id}
\end{align}
We can apply the implicit function theorem to $F$ on
$\mathcal{C}^0\left([0, T], \big(H^2\big)^{2N}\right)$. For $\varepsilon$ small enough, there exists a unique solution
$U(\varepsilon)$ of $F(\varepsilon,U)=0$ in a neighborhood of $0$. Finally, we can check that this solution is in
$\mathcal C^{1}([0,T], (L^{2})^{2N}) \cap \mathcal C^{0}([0,T], (H^{2})^{2N})$ and
solves \eqref{eq:time_evolution_splitted}.

Again by the implicit function theorem, the solution $U(\varepsilon)$
is $\mathcal C^{1}$ from a neighborhood of $0$ to
$\mathcal{C}^0\left([0, T], \big(H^2\big)^{2N}\right)$, and
\begin{align}
  \label{eq:8}
  \frac d {d\varepsilon} U(0) = - \frac{\partial F}{\partial U}(0,0)^{-1} \frac{\partial F}{\partial \varepsilon}(0,0) = -\int_0^te^{-JM_{\rm dyn}(t-t')}J f(t') V_P\Psi^0 dt' = U^{1}
\end{align}
It follows that, for all $t \in [0,T]$,
\begin{align*}
  \rho(t)=\rho_0+\varepsilon \rho^{(1)}(t)+O(\varepsilon^2)
\end{align*}
in $H^{2}$, with $\rho^{(1)}(t)=S_0(U^1(t))$. Following the notations of section \ref{sec:linearized_equation}, $U^1_S=0$ and thus $S_0(U^1(t))=S_0(U^1_{\perp}(t))$. The expression of $U^1_{\perp}$ is given by:
\begin{align}
    U^1_{\perp}(t)= -\int_0^te^{-JM(t-t')}J f(t') V_P\Psi^0 dt'
\end{align}
The expression of $\chi(t)$
follows.

Since $t \mapsto U^1_{\perp}(t)$ is continuous and bounded in the $H^2$ topology, $\chi$
defines a tempered distribution on $\R$ with values in $\mathcal L(H^{2}, H^{2})$. Since
$\chi$ is causal, we have $\lim_{\eta \to 0^{+}} e^{-\eta t} \chi(t) = \chi(t)$
in the sense of tempered distributions. As operators on $H^{2}$, we
have
\begin{align}
  \int_{0}^{\infty} e^{-\eta t} e^{i\omega t} e^{-JM t} &= \frac 1 {JM - \ii(\omega+\ii\eta)}
\end{align}
so that, in the sense of tempered distributions,
\begin{align}
  \hat{\chi}(\omega) V_P&= \lim_{\eta \rightarrow 0^+} -S_{0}\left(\frac 1 {JM - \ii(\omega+\ii\eta)} J V_{P} \psi^{0} \right) = \lim_{\eta \rightarrow 0^+} -S_{0}\left(\frac 1 {M + \ii(\omega+\ii\eta) J} V_{P} \psi^{0} \right).
\end{align}

\section{Resonances}\label{sec:propagator_and_response}
In this section, we prove Theorem \ref{th:resonances} on resonances,
assuming Assumptions \ref{assumption6} (exponential decay of the total
potential) and \ref{assumption7} (non degenerate excitation energy
$\lambda_a-\lambda_i$).

In the Casida representation,
\begin{align}
  \label{eq:3}
  M + \ii zJ \simeqcasida
  \begin{pmatrix}
    \Omega-z&0\\0&\Omega+z
  \end{pmatrix} + K_0
\end{align}
where we recall that
$(\Omega U)_{i} = \left( -\tfrac 1 2 \Delta -\lambda_{i} \right)u_{i} + V_{\rm tot} u_{i}$.
We will show the analytical continuation of the inverse of $M+\ii z J$
from the upper complex to the lower one by a perturbation argument,
starting from the resolvent of the Laplacian. We will then study
resonances by identifying an eigenvalue at the same energy as
continuous spectrum in the operator $\Omega$, and proving that this
generically becomes a resonance when perturbed by $K_{0}$.

\subsection{Analytic continuation of the free Laplacian}
We start with a classical lemma on the analytic continuation of the
free Laplacian.
\begin{lemma}[Meromorphic continuation of the resolvent of the free
  Laplacian]\label{lemma_free_laplacian_direct_version}
  For $\alpha > 0$, the resolvent $(z+\tfrac 1 2 \Delta)^{-1}$ as an
  operator from $L^{2}_{\alpha}$ to $H^{2}_{-\alpha}$
  has an analytic continuation from the first quadrant
  ${\rm Re}(z) > 0, {\rm Im}(z) > 0$ to the region
  ${\rm Re}(z) > 0, {\rm Im}(\sqrt{2 z}) > -\alpha$.
\end{lemma}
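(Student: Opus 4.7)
The plan is to exploit the explicit integral kernel of the free resolvent in $\R^{3}$. For $z$ in the first quadrant, let $k(z) = \sqrt{2z}$ via the principal branch, which is analytic on $\{\mathrm{Re}(z) > 0\}$ and satisfies $\mathrm{Im}(k) > 0$ on the first quadrant. Then $(z+\tfrac{1}{2}\Delta)^{-1}$ is the integral operator with the outgoing Green's function kernel
\begin{align}
G_{z}(x,y) = -\frac{1}{2\pi}\frac{e^{\ii k |x-y|}}{|x-y|}.
\end{align}
The task reduces to proving that this formula defines a bounded operator from $L^{2}_{\alpha}$ to $H^{2}_{-\alpha}$, analytic in $z$, throughout the region $\{\mathrm{Re}(z) > 0,\ \mathrm{Im}(k) > -\alpha\}$; uniqueness of analytic continuation then identifies it with the resolvent on the first quadrant.

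For boundedness in weighted $L^{2}$ spaces, fix $z$ in the extended region and pick $\epsilon > 0$ with $\mathrm{Im}(k) > -\alpha + \epsilon$. Then $|G_{z}(x,y)| \le C\, e^{(\alpha-\epsilon)|x-y|}/|x-y|$, and using $|x-y| \le \langle x \rangle + \langle y \rangle$ the weighted kernel $\tilde{G}_{z}(x,y) := e^{-\alpha \langle x \rangle}G_{z}(x,y)e^{-\alpha \langle y \rangle}$ is dominated by $\frac{C}{|x-y|}\, e^{-\epsilon(\langle x \rangle + \langle y \rangle)}$. A direct computation in $\R^{3}$ (splitting into $|x-y| \le 1$, where the Coulomb singularity is locally integrable, and $|x-y| > 1$, where the exponential decay dominates) shows that $\int \frac{e^{-\epsilon \langle y \rangle}}{|x-y|}\,dy$ is bounded uniformly in $x$, and symmetrically in the other variable. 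Schur's test then gives boundedness of $\tilde G_z$ on $L^{2}$, and hence of $G_{z} : L^{2}_{\alpha} \to L^{2}_{-\alpha}$, with norm locally uniform in $z$.

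To upgrade the target space to $H^{2}_{-\alpha}$, I observe that $u = G_{z}f$ satisfies $(z+\tfrac{1}{2}\Delta)u = f$, so $\Delta u = 2(f - zu) \in L^{2}_{-\alpha}$ (using $L^{2}_{\alpha} \hookrightarrow L^{2}_{-\alpha}$); since the weight $e^{-\alpha \langle \cdot \rangle}$ has bounded first and second derivatives, this promotes $u$ to $H^{2}_{-\alpha}$ with quantitative control. Analyticity of $z \mapsto G_{z}$ as an $\mathcal{L}(L^{2}_{\alpha}, H^{2}_{-\alpha})$-valued map follows from differentiating the kernel pointwise in $z$, which introduces at worst an extra factor $|x-y|$, easily absorbed by shrinking $\epsilon$ via $r e^{-\epsilon r/2} \le C$. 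The only real subtlety is bookkeeping of the branch: on $\{\mathrm{Re}(z) > 0\}$ the principal $\sqrt{2z}$ is globally analytic, and one should note that on the fourth quadrant the operator constructed here is \emph{not} the $L^{2} \to L^{2}$ resolvent of $-\tfrac{1}{2}\Delta$ (which would correspond to the opposite branch), but a genuinely different object whose relevance is precisely to exhibit resonances below the continuous spectrum.
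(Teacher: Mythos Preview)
Your proof is correct and takes a genuinely different route from the paper's. The paper works in Fourier space: it writes $\langle \psi | (z+\tfrac{1}{2}\Delta)^{-1}\varphi\rangle$ via Parseval, isolates the energy shell by splitting the $q$-integral according to whether $\tfrac{1}{2}|q|^{2}$ lies in a fixed interval $I$, and then deforms the contour in the energy variable $\lambda$ into the lower half-plane, using that exponential decay of $\varphi,\psi$ makes the spherical averages $\lambda \mapsto \int_{S^{2}}\overline{\widehat\psi(\sqrt{2\lambda}\tilde q)}\,\widehat\varphi(\sqrt{2\lambda}\tilde q)\,d\tilde q$ analytic in a strip $|{\rm Im}\sqrt{2\lambda}|<\alpha$. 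You instead exploit the explicit 3D Green's kernel $-\tfrac{1}{2\pi}e^{\ii k|x-y|}/|x-y|$ and run a Schur test in weighted $L^{2}$, then bootstrap to $H^{2}_{-\alpha}$ via the equation.

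Your approach is arguably more elementary and more transparent about where the threshold $\mathrm{Im}(\sqrt{2z})>-\alpha$ comes from (it is exactly the condition that the exponential growth of the kernel be beaten by the weights). The paper's contour-deformation argument, on the other hand, is dimension-agnostic and interfaces more directly with the spectral-measure picture used later in the Fermi golden rule computation. Two small remarks on your write-up: the $H^{2}_{-\alpha}$ upgrade deserves one more line --- from $u,\Delta u \in L^{2}_{-\alpha}$ one gets $v = e^{-\alpha\langle\cdot\rangle}u \in L^{2}$ with $-\Delta v + 2b\cdot\nabla v \in L^{2}$ for a bounded $b$, hence $v\in H^{1}$ then $v\in H^{2}$ by bootstrap; and differentiating the kernel once in $z$ actually removes the $1/|x-y|$ singularity rather than adding a factor $|x-y|$, so the derivative is even easier to bound than you suggest.
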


This lemma is classical and well-known in the mathematical study of
resonances; see \cite{dolph1966analytic} for instance. Nevertheless,
to keep the paper self-contained, we reprove it here.
\begin{proof}[Proof of Lemma
  \ref{lemma_free_laplacian_direct_version}]~ 

  Let $\varphi$ in $L^2_{\alpha}$ and $\psi$ in the dual space of
  $H^2_{-\alpha}$; in particular, $\psi \in H^{-2}$ and $\widehat \psi$
  is smooth. We will study the
  analytic continuation in $z$ of $\langle \psi | (z+\tfrac{\Delta}{2})^{-1}  \varphi \rangle$
  as ${\rm Im}(z)$ becomes negative.

  We use the usual Fourier transform convention in space, $\mathcal{F}\psi(q)= \tfrac{1}{(2 \pi)^{3/2}}\int_{\mathbb{R}^3} e^{-\ii q \cdot r}\psi(r) dr$. Then for ${\rm Im}(z)>0$, by the Parseval formula:  

  \begin{align*} 
    \langle \psi | (z+\tfrac{\Delta}{2})^{-1} | \varphi \rangle &= \int_{\mathbb{R}^3} \frac{1}{z-\tfrac{1}{2}|q|^2}\overline{\mathcal{F}{\psi}(q)} \mathcal{F}{\varphi}(q)dq
  \end{align*}
    Let $I \subset \R^{+}$ be an interval not touching $0$. We split the above
    integral in two contributions $A_{I}(z)$ and $A_{\R^{+} \setminus I}(z)$, according to
    whether $\tfrac 1 2 |q|^{2}$ belongs to $I$ or not.

    For $A_{\R^{+} \setminus I}(z)$, since $\psi \in H^{-2}$ and $\phi \in L^{2}$, we
    have that
    \begin{align*}
      A_{\R^{+} \setminus I}(z) &= \int_{q \in \mathbb{R}^3, \tfrac 1 2 |q|^{2} \notin I} \frac{1}{z-\tfrac{1}{2}|q|^2}\overline{\mathcal{F}{\psi}(q)} \mathcal{F}{\varphi}(q)dq\\
      &=\int_{q \in \mathbb{R}^3, \tfrac 1 2 |q|^{2} \notin I} \frac{1+|q|^{2}}{z-\tfrac{1}{2}|q|^2}\frac{\overline{\mathcal{F}{\psi}(q)}}{1+|q|^{2}} \mathcal{F}{\varphi}(q)dq
    \end{align*}
    is analytic for $z$ in $I + \ii \R$.

    We now write $A_{I}(z)$ as
    \begin{align*}
      A_{I}(z)&= \int_{I}\frac 1 {z-\lambda} \underbrace{\frac{1}{(2\pi)^3}\int_{S^2}{\overline{\mathcal{F}{\psi}(\sqrt{2\lambda}\tilde{q})} \mathcal{F}{\varphi}(\sqrt{2\lambda}\tilde{q})} \sqrt{2\lambda}d\tilde{q}}_{f(\lambda)} d\lambda .
    \end{align*}
    The functions defined for $\lambda$ real by
    \begin{align*}
      \mathcal{F}\varphi(\sqrt{2\lambda}\tilde{q})&= \int_{\mathbb{R}^3}e^{-\ii \sqrt{2\lambda} \tilde{q} r}\varphi(r) dr = \int_{\mathbb{R}^3}e^{-\ii {\rm Re}(\sqrt{2\lambda}) \tilde{q} r} e^{{{\rm Im}(\sqrt{2\lambda}) \tilde{q} r}}\varphi(r) dr\\
      {\overline{\mathcal{F} \psi\left( {\sqrt{2\lambda}}\tilde{q} \right)}} &= \int_{\mathbb{R}^3}e^{\ii \sqrt{2\lambda} \tilde{q} r}\overline{\psi(r)} dr= \int_{\R^{3}}e^{\ii {\rm Re}(\sqrt{2\lambda}) \tilde{q} r} e^{-{{\rm Im}(\sqrt{2\lambda}) \tilde{q} r}} \overline{\psi(r)} dr
    \end{align*}
    (where the second line is to be understood in the sense of
    distributions) extend analytically for $\lambda$ in the set
    \begin{align*}
      D_{I,\alpha} = \{z \in I+\ii\R, |{\rm Im}(\sqrt{2 z})| < \alpha\}.
    \end{align*}
    It follows that $f$ also extends to this set, so that, for
    $z \in D_{I,\alpha}, {\rm Im}(z) > 0$
    \begin{align*}
      A_{I}(z)&= \int_{I}\frac 1 {z-\lambda} f(\lambda)d\lambda = \int_{C_{\varepsilon}}\frac 1 {z-\lambda} f(\lambda)d\lambda
    \end{align*}
    where $C_{\varepsilon}$ is a contour starting at $\inf I$, dropping vertically
    to the bottom of $D_{I,\alpha-\varepsilon}$, following its lower edge, and coming
    back up at $\sup I$. This shows that $A_{I}$ extends analytically to
    $D_{I,\alpha-\varepsilon}$. Since $I$ and $\varepsilon$ were arbitrary, this concludes the proof.

\end{proof}

\subsection{Meromorphic continuation of the resolvent of
  \texorpdfstring{$JM$}{JM}}

Consider the family of operators defined in the Casida representation by
\begin{align}
  -\tfrac 1 2 \Delta - \Lambda + \ii z J
  \simeqcasida
  \begin{pmatrix}
    -\tfrac 1 2 \Delta - \Lambda  - z& 0\\
    0& -\tfrac 1 2 \Delta - \Lambda  + z
  \end{pmatrix}
\end{align}
From Assumption \ref{assumption7}, when $z$ is close to
$\lambda_{a_{0}}-\lambda_{i_{0}}$, the shifts
$(\lambda_{i} \pm z)$ are close to a nonzero real number.
It follows from the previous Lemma that this family has an inverse
that can be continued analytically from the upper half complex plane
to a complex neighborhood of
$\lambda_{a_{0}} - \lambda_{i_{0}}$ as an operator from $L^{2}_{\alpha}$ to
$H^{2}_{-\alpha}$. We now write formally for ${\rm Im}(z) > 0$
\begin{align}
  M_{\mathrm{dyn}} +\ii z J &= -\tfrac 1 2 \Delta - \Lambda +\ii z J + (V_{\rm tot} + K_0)\\
  \label{eq:M_as_pert_of_Mess}
  \frac 1 {M_{\mathrm{dyn}} +\ii z J} &= \frac 1 {-\tfrac 1 2 \Delta - \Lambda +\ii z J} \left( 1 + (V_{\rm tot} + K_0) \frac 1 {-\tfrac 1 2 \Delta - \Lambda +\ii z J} \right)^{-1}.
\end{align}

This formal computation is justified in the proof of Proposition \ref{proposition_meromorphic_1}
%
\begin{proposition}[Meromorphic continuation of $(M+\ii z J)^{-1}$]\label{proposition_meromorphic_1}
  For $\alpha > 0$ small enough, the operator $(M + \ii z J)^{-1}$
  from $(L^{2}_{\alpha})^{2N}$ to $(H^{2}_{\alpha})^{2N}$
  has an analytic continuation from the upper complex plane to a
  complex neighborhood of $\lambda_{a_{0}}-\lambda_{i_{0}}$.
\end{proposition}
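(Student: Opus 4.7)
My plan is to use the factorization~\eqref{eq:M_as_pert_of_Mess}, which expresses $(M+\ii zJ)^{-1}$ as a product of the ``free'' resolvent $(-\tfrac{1}{2}\Delta-\Lambda+\ii zJ)^{-1}$ and the inverse of $1+(V_{\rm tot}+K_0)(-\tfrac{1}{2}\Delta-\Lambda+\ii zJ)^{-1}$. The first factor is handled by Lemma~\ref{lemma_free_laplacian_direct_version} applied block-by-block, and the second by an analytic Fredholm argument.

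First, I continue the free resolvent. In the Casida representation, $-\tfrac{1}{2}\Delta-\Lambda+\ii zJ$ is block-diagonal with scalar blocks $-\tfrac{1}{2}\Delta-\lambda_i\mp z$ for $i=1,\dots,N$. Assumption~\ref{assumption7} guarantees that for $z$ near $\lambda_{a_0}-\lambda_{i_0}$ each $\lambda_i\pm z$ stays bounded away from $0$. Blocks with $\lambda_i\pm z<0$ are trivially invertible with analytic resolvent on a full complex neighborhood, while blocks with $\lambda_i\pm z>0$ fall under Lemma~\ref{lemma_free_laplacian_direct_version}, which analytically continues $(\lambda_i\pm z+\tfrac{1}{2}\Delta)^{-1}$ from the upper half-plane to a strip around the positive real axis, as an operator from $L^2_\alpha$ to $H^2_{-\alpha}$ for $\alpha>0$ sufficiently small. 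Assembling the blocks, $(-\tfrac{1}{2}\Delta-\Lambda+\ii zJ)^{-1}$ extends analytically as an operator $(L^2_\alpha)^{2N}\to(H^2_{-\alpha})^{2N}$ on a complex neighborhood $\mathcal U$ of $\lambda_{a_0}-\lambda_{i_0}$.

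Next, I invert the perturbative factor via the analytic Fredholm theorem. By Assumption~\ref{assumption6}, multiplication by $V_{\rm tot}$ is bounded from $(H^2_{-\alpha})^{2N}$ to $(L^2_\alpha)^{2N}$ for $\alpha$ small enough, and the exponential decay of $\Psi^0$ (from Assumption~\ref{assumption_negative_eigvals}) together with the explicit formula for $K_0$ gives the same bound for $K_0$, since every component of $K_0(U)$ has $\Psi^0$ as an outer multiplicative factor. Composing with the free resolvent, $T(z):=(V_{\rm tot}+K_0)(-\tfrac{1}{2}\Delta-\Lambda+\ii zJ)^{-1}$ is an analytic, bounded, operator-valued function on $\mathcal U$ acting on $(L^2_\alpha)^{2N}$. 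Its compactness is established using a weighted Rellich-type embedding, combined with the $\Delta$-compactness of $V_{\rm tot}$ (Assumption~\ref{assumption1}) transported to the weighted spaces and with the localizing structure of $K_0$. For $z$ in the upper half-plane with $\mathrm{Im}\,z$ large, $\|T(z)\|<1$ and $1+T(z)$ is invertible by a Neumann series; the analytic Fredholm theorem then yields a meromorphic inverse on $\mathcal U$. Combining with the continued free resolvent via~\eqref{eq:M_as_pert_of_Mess} concludes the proof.

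The main obstacle is the verification of compactness of $T(z)$ on $(L^2_\alpha)^{2N}$: the free resolvent increases regularity but worsens decay (mapping into $(H^2_{-\alpha})^{2N}$), so one must carefully use the exponential decay of the orbitals (for the $K_0$ part) and a weighted version of the $\Delta$-compactness of $V_{\rm tot}$ to compensate and recover a compact self-map of $(L^2_\alpha)^{2N}$. The sign mismatch between the weight $+\alpha$ at the input/output of $T(z)$ and the weight $-\alpha$ appearing in the intermediate space is precisely what makes the analytic Fredholm machinery work, and getting this bookkeeping right—especially in ensuring that $\alpha$ can be chosen uniformly small enough to cover simultaneously the free-resolvent continuation, the boundedness of $V_{\rm tot}+K_0$ across weights, and the compactness—is the heart of the technical argument.
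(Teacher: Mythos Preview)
Your proposal is correct and follows essentially the same approach as the paper: factor via \eqref{eq:M_as_pert_of_Mess}, continue the free resolvent block-by-block using Lemma~\ref{lemma_free_laplacian_direct_version}, seed invertibility of $1+T(z)$ at large $\mathrm{Im}\,z$ by a Neumann bound, and apply the analytic Fredholm theorem. The paper's own proof is in fact terser than yours, writing the large-$\eta$ bound in one line and invoking analytic Fredholm directly without spelling out the compactness of $T(z)$ that you correctly flag as the main bookkeeping point.
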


\begin{proof}
  We have for all $\omega \in \R, \eta > 0$
  \begin{align*}
    \left\|(V_{\rm tot} + K_{0}) \frac 1 {-\tfrac 1 2 \Delta - \Lambda +\ii(\omega+\ii\eta) J}\right\|_{L^{2}_{\alpha} \to L^{2}_{\alpha}} &\le \|V_{\rm tot}+K_{0}\|_{H^{2}_{-\alpha}\to L^{2}_{\alpha}} \|(-\tfrac 1 2 \Delta - \Lambda\pm(\omega+i\eta))^{-1}\|_{L^{2}_{\alpha}\to H^{2}_{-\alpha}}\\
    &\le\|V_{\rm tot}+K_{0}\|_{H^{2}_{-\alpha}\to L^{2}_{\alpha}} \|(-\tfrac 1 2 \Delta - \Lambda\pm(\omega+i\eta))^{-1}\|_{L^{2} \to H^{2}}\\
    &\le\frac 1 \eta \|V_{\rm tot}+K_{0}\|_{H^{2}_{-\alpha}\to L^{2}_{\alpha}}
  \end{align*}
  so that $1+ (V_{\rm tot} + K_{0}) (-\tfrac 1 2 \Delta - \Lambda +\ii(\omega+\ii\eta) J)^{-1}$ is
invertible for $\eta$ large enough in $L^{2}_{\alpha}$. It follows from
the analytic Fredholm theorem (Theorem C.8 of
\cite{dyatlov_mathematical_2019}) that
$1+ (V_{\rm tot} + K_{0}) (-\tfrac 1 2 \Delta - \Lambda +\ii(\omega+\ii\eta) J)^{-1}$ has a
meromorphic continuation to a complex neighborhood of
$\lambda_{a_{0}}-\lambda_{i_{0}}$, and the result follows by \eqref{eq:M_as_pert_of_Mess}.
\end{proof}
This proves the first part of Theorem \ref{th:resonances}.

\subsection{Resonances}
We now  prove the second part of Theorem~\ref{th:resonances}.
We now investigate the poles of $(M_{\mathrm{dyn}} +\ii z J)^{-1}$ on ${\rm Ran}(1-P_0)$, which are also the poles of $(M+\ii z J)^{-1}$, in the
asymptotic regime where $K_0$ is small.

For $K_{0}=0$, in the Casida representation,
\begin{align}
  \Omega +\ii z J \simeqcasida
  \begin{pmatrix}
    \Omega - z& 0\\
    0& \Omega + z
  \end{pmatrix}
\end{align}
Near $z = \lambda_{a_{0}}-\lambda_{i_{0}} > 0$, the block $\Omega + z$ is always invertible for
${\rm Im}(z) > 0$. The operator $\Omega +\ii(\lambda_{a_{0}}-\lambda_{i_{0}}) J$ has a
simple zero eigenvalue with eigenvector
\begin{align}
  \label{eq:9}
  (U_{i_{0}\to a_{0}})_{i} \simeqcasida
  \delta_{i,i_{0}}
  \begin{pmatrix}
    \psi_{a_{0}} \\0
  \end{pmatrix}
\end{align}
where $H_{0} \psi_{a_{0}} = \lambda_{a_0} \psi_{a_{0}}$ with
$\|\psi_{a_{0}}\|=1$. It also has
continuous spectrum at $0$ in all the ionized sectors $i$ such that
$\lambda_{a_{0}} - \lambda_{i_{0}} > -\lambda_{i}$. By the results of
the previous section, the inverse of its analytic continuation from
$L^{2}_{\alpha}$ to $H^{2}_{-\alpha}$ has a single pole at
$\lambda_{a_{0}}-\lambda_{i_{0}}$, with residue
$P_{i_{0}\to a_{0}} = |U_{i_{0}\to a_{0}}\rangle\langle  U_{i_{0}\to a_{0}}|$.



We now split the space $(L^{2})^{2N}$ orthogonally in ${\rm Ran}(P_{i_{0}\to a_{0}})$ and
${\rm Ran}(1  - P_{i_{0}\to a_{0}})$. By a perturbation argument, there exists a complex neighborhood of
$\lambda_{a_{0}}-\lambda_{i_{0}}$ inside which, for
$\delta = \|K_0\|_{L^2_\alpha \to H^2_{-\alpha}}$ small enough, the
orthogonal restriction on ${\rm Ran}(1 - P_{i_{0}\to a_{0}})$ of the operator
$M+\ii zJ$ is invertible; let
\begin{align*}
  R^{\perp}(z) = \left((M+\ii zJ)|_{{\rm Ran}(1 - P_{i_{0}\to a_{0}})}\right)^{-1}
\end{align*}
this inverse. By a Schur
complement, the operator $M +\ii z J$ is not invertible as an operator from
$L^{2}_{-\alpha}$ to $L^{2}_{\alpha}$ if and only if the scalar Schur
complement $S(z)$ vanishes, where
\begin{align}
  \label{eq:10}
  S(z) &= \langle  U_{i_{0}\to a_{0}}|M +\ii z J|U_{i_{0}\to a_{0}} \rangle - \langle  U_{i_{0}\to a_{0}}|(M+\ii zJ) R^{\perp}(z)(M+\ii zJ)|U_{i_{0}\to a_{0}} \rangle\\
  &= (\lambda_{a_{0}}-\lambda_{i_{0}} - z) + \langle  U_{i_{0}\to a_{0}}|K_{0}|U_{i_{0}\to a_{0}} \rangle - \langle  U_{i_{0}\to a_{0}}|(M+\ii zJ) R^{\perp}(z)(M+\ii zJ)|U_{i_{0}\to a_{0}} \rangle.
\end{align}
This is an analytic equation in $z$, and the third term is of order
$2$ in $\delta$ for $z$ in a small enough neighborhood of $\lambda_{a_0}-\lambda_{i_0}$. It follows by the
implicit function theorem that, for
$\delta$ small enough, there is a
zero $z_{\rm pole}$ of $S$ close to $\lambda_{a_{0}}-\lambda_{i_{0}}$
such that
\begin{align}
  \label{eq:13}
  z_{\rm pole} = \lambda_{a_{0}}-\lambda_{i_{0}} + \langle  U_{i_{0}\to a_{0}}|K_0|U_{i_{0}\to a_{0}} \rangle - \lim_{\eta \to 0^{+}}\langle  U_{i_{0}\to a_{0}}|K_0 R^{\perp}(\lambda_{a_{0}}-\lambda_{i_{0}}+\ii \eta) K_0|U_{i_{0}\to a_{0}} \rangle + O(\delta^{3}).
\end{align}

By the Sokhotski-Plemelj formula, the skew-adjoint part of
$\lim_{\eta\to 0^{+}}R^{\perp}(\lambda_{a_{0}}-\lambda_{i_{0}}+\ii \eta)$ is given by
\begin{align}
  \label{eq:14}
  \lim_{\eta \to 0^{+}}{\mathcal A}(R^{\perp}(\lambda_{a_{0}}-\lambda_{i_{0}}+\ii \eta)) \simeqcasida
  \ii\pi \sum_{\substack{1 \leq i \leq N\\ \lambda_{a_{0}}-\lambda_{a_{0}} + \lambda_{i}>0}}(\Pi_{i}^{+})^{*}p_{H_{0}}(\lambda_{a_{0}}-\lambda_{i_{0}} + \lambda_{i}) \Pi_{i}^{+}
\end{align}
where $p_{H_{0}}(\lambda)d\lambda$ is the projection-valued measure
associated with $H_{0}$ and
\begin{align*}
  (\Pi_{i}^{+} U)_{j} \simeqcasida
  \delta_{ij}
  \begin{pmatrix}
     u_{j}\\0
  \end{pmatrix}
\end{align*}
is the projection on the upper block, $i$-th sector of $(L^{2})^{2N}$.

It follows that
\begin{align}
  {\rm Im}(z_{\rm pole}) = - \pi \sum_{i, \lambda_{a_{0}}-\lambda_{a_{0}} + \lambda_{i}>0} \langle  U_{i_0 \to a_0} |K_{0}(\Pi_{i}^{+})^{*}p_{H_{0}}(\lambda_{a_{0}}-\lambda_{i_{0}} + \lambda_{i}) \Pi_{i}^{+}K_{0} |U_{i_0 \to a_0}  \rangle+ O(\delta^{3}).
  \label{eq:15}
\end{align}
It remains to check that the residue at this pole is nonzero. 
By definition of $\chi$, it is the case if $\langle S_0^*V_P, U_{i_0 \to a_0} \rangle \not=0$ for some $V_P \in H^2$, \textit{i.e.} $S_0U_{i_0 \to a_0} = \psi_{i_0}\overline{\psi}_{a_0} + \psi_{a_0}\overline{\psi}_{i_0} \not=0$.

\section*{Acknowledgement}

This project has received funding from the European Research Council (ERC) under the European Union’s Horizon 2020 research and innovation program (grant agreement No. 810367).

\appendix
\addcontentsline{toc}{section}{Appendices}
\section*{Appendices}

\section{Notation memo }\label{sec:appendix_tables}

The main notations related to the complex structure of the space are reminded in table \ref{tab:notations}. 
Table \ref{table_representations} summarizes the representation of conjugation operations within the Casida and Real/Imaginary representations.  
The notations of the most important operators of the paper are gathered in table \ref{tab:notations2}.
\begin{table}[h!] 
\begin{center}
\begin{tabular}{|c|c|}
\hline
\textbf{Symbol} & \textbf{Explanation} \\\hline & \\  [-1em] 
$\mathcal{Y} = (L^2(\mathbb{R}^3, \mathbb{C}))^N$ & \\ \hline
$\jj$ & Imaginary unit in $\mathcal{Y}$ \\ \hline 
$Y_{\mathbb{R}} = (\mathcal{Y}, \mathbb{R})$ & $\mathcal{Y}$ endowed with a real vector space structure  \\ \hline
$J$ & Representation of the multiplication operator by $\jj$ in $Y_{\mathbb{R}}$ \\  \hline
$\ii$ & Imaginary unit in the complexification of $Y_{\mathbb{R}}$  \\ \hline
$Y= Y_{\mathbb{R}}+ \ii   Y_{\mathbb{R}}=(L^2)^{2N}$ & Complexification of $Y_{\mathbb{R}}$ \\\hline
\end{tabular}
\end{center}
\vspace{-10pt}
\caption{\label{tab:notations}Collection of the main notations related to the complexification of space.}
\end{table}

\begin{table}[h!]
  \centering
  \begin{tabular}[h!]{|c|c|c|}
    \hline
      &\textbf{Casida representation }&\textbf{Real/imaginary representation}\\
    \hline
    $U_{\rm r} + \jj U_{\rm j}$ for $U_{\rm r},U_{\rm j} \in (L^2(\mathbb{R}^3, \mathbb{R}))^N$ &$
                                  \begin{pmatrix}
                                    U_{\rm r} + \ii U_{\rm j} \\ U_{\rm r} - \ii U_{\rm j}
                                  \end{pmatrix} \in
                                         \left\{\begin{pmatrix}
                                             U\\\overline U
                                           \end{pmatrix}, U \in (L^2(\mathbb{R}^3, \mathbb{C}))^N\right\}$&$
                                                  \begin{pmatrix}
                                                    U_{\rm r}\\U_{\rm j}
                                                  \end{pmatrix} \in (L^2(\mathbb{R}^3, \mathbb{R}))^{2N}$ \\
    \hline
    $U+\ii V$ for $U,V \in (L^2(\mathbb{R}^3, \mathbb{C}))^N$& 
                              $\begin{pmatrix}
                                U+\ii V\\  \overline{U} + \ii  \overline{V}
                              \end{pmatrix} \in (L^2(\mathbb{R}^3, \mathbb{C}))^{2N}
                              $  & $
                                  \begin{pmatrix}
                                    U_{\rm r}+\ii V_{\rm r}\\
                                    U_{\rm j} + \ii V_{\rm j}
                                  \end{pmatrix} \in (L^2(\mathbb{R}^3, \mathbb{C}))^{2N}
                                  $\\
    \hline
    $J$ (multiplication by $\jj$) & $
                            \begin{pmatrix}
                              \ii &0\\0&-\ii
                            \end{pmatrix}
                            $&$
                               \begin{pmatrix}
                                 0&-1\\
                                 1&0
                               \end{pmatrix}$
    \\
    \hline
    $U \mapsto A U + B \overline U$ & $\begin{pmatrix} A & B\\\overline{B}&\overline{A}  \end{pmatrix}$ &
    $\begin{pmatrix} A_{\rm r}+B_{\rm r} & -A_{\rm j}+B_{\rm j}\\A_{\rm j}+B_{\rm j}&A_{\rm r}-B_{\rm r}  \end{pmatrix}$\\
    \hline
  \end{tabular}
  \caption{Summary of the different representations of the operators in $Y$.}\label{table_representations}
\end{table}

\begin{table}[h!] 
  \begin{center}
   \begin{tabular}{|p{9cm}|p{6.5cm}|p{1cm}|}\hline
    \parbox{9cm}{\centering \textbf{Definition}}  &  \parbox{6.5cm}{\centering \textbf{Remarks}} & \parbox{1cm}{\centering \textbf{Eq.}}\\ \hline
    $~~~~H_0=H[\rho_0]$ &   Hamiltonian at $\Psi^0$, acting on $L^2$ or $(L^2)^N$ &  \parbox{1cm}{\centering\eqref{eq:Hamiltonian}}\\ 
    \hline
    $~~~~P_0: \begin{cases} L^2(\mathbb{R}^3, \mathbb{C})^N  & \rightarrow  L^2(\mathbb{R}^3, \mathbb{C})^N  \\ 
    U & \mapsto  \left(\sum_{j=1}^N|\psi_j^0 \rangle \langle \psi_j^0| u_i\right)_{i=1}^N \end{cases}  $ & Projector on the occupied orbitals & \parbox{1cm}{\centering\eqref{eq:16}}\\
    \hline
    $~~~~\mathcal{S}_0: \begin{cases} L^2(\mathbb{R}^3, \mathbb{C})^N &\rightarrow \mathbb{C}  \\
    U &\mapsto \sum_{j=1}^N \overline{\psi_j^0} u_j +  \psi_j^0 \overline{u_j} \end{cases} $ & Orbital variation to density variation map & \parbox{1cm}{\centering\eqref{eq:Spsi}}  \\
    \hline
    $~~~\mathcal{K}_0: \begin{cases} L^2(\mathbb{R}^3, \mathbb{C})^N &\rightarrow  L^2(\mathbb{R}^3, \mathbb{C})^N  \\ U &\mapsto \left(\mathcal{S}_0(U)*\frac{1}{|r|} + v_{\rm xc}'(\rho_0 )\mathcal{S}_0(U)\right) \Psi^0\end{cases} $ & Unconstrained Hessian of the energy &\parbox{1cm}{\centering\eqref{eq:def_K}} \\
    \hline
    $~~~~~\Lambda: \begin{cases} L^2(\mathbb{R}^3, \mathbb{C})^N & \rightarrow L^2(\mathbb{R}^3, \mathbb{C})^N \\ U & \mapsto \left( \lambda_i u_i\right)_{i=1}^N \end{cases} $ & Multiplication by the eigenvalues of $H_0$ & \parbox{1cm}{\centering\eqref{eq:def_Lambda}}\\
    \hline
    $~~~~~\Omega: \begin{cases} L^2(\mathbb{R}^3, \mathbb{C})^N & \rightarrow L^2(\mathbb{R}^3, \mathbb{C})^N \\ U & \mapsto (H_0-\Lambda) U \end{cases}$ & Non-interacting constrained Hessian & \parbox{1cm}{\centering\eqref{eq:def_Omega}}\\
    \hline
    $\mathcal{M}_{\rm dyn}:\begin{cases}L^2(\mathbb{R}^3, \mathbb{C})^N & \rightarrow L^2(\mathbb{R}^3, \mathbb{C})^N \\U& \mapsto \Omega + \mathcal{K}_0\end{cases} $ & Constrained Hessian &\parbox{1cm}{\centering\eqref{eq:def_M_dyn}}\\  & & \\[-1em]
    \hline 
    $~~~~~S_0,~K_0,~ M_{\rm dyn}$ & $\mathbb{C}$-linear extensions of $\mathcal{S}_0, ~\mathcal{K}_0, \mathcal{M}_{\rm dyn}$ to $Y$ &  \\
    \hline
    $~~~~~M= (1-P_0)M_{\rm dyn}(1-P_0)$& $M_{\rm dyn}$ restricted to $\Big\{\rm Ran (\psi_i^0)_{i=1}^N\Big\}^\perp$ & \parbox{1cm}{\centering\eqref{eq:21}}\\
    \hline
  \end{tabular}
  \end{center}
  \vspace{-10pt}
  \caption{\label{tab:notations2}Notations related to the main operators of the paper.}
  \end{table}

\section{Second-order expansion of the energy}
\label{sec:energy}
\begin{proof}[Proof of Proposition \ref{generalites_E_H_K}]
\label{sec:proof_generalites_E_H_K}
The energy is
\begin{align}
  \mathcal{E}(\Psi)&=\sum_{i=1}^N\left( \frac{1}{2}\int_{\mathbb{R}^3} |\nabla \psi_i(r)|^2 + \int_{\mathbb{R}^3} V_{\rm ext}(r)|\psi_i(r)|^2 dr \right)+ \frac{1}{2} \int_{\mathbb{R}^3} \frac{\rho_{\Psi}(r) \rho_{\Psi}(r')}{|r-r'|}drdr' + \int_{\mathbb{R}^3} e_{\rm xc}(\rho_{\Psi}(r)) dr
\end{align}
We treat these terms in order. The first term is clearly smooth from
$H^{2}$ to $\R$. Since $H^{2}$ is an algebra, the mapping
$\psi_{i} \mapsto |\psi_{i}|^{2}$ is smooth from $H^{2}$ to $H^{2}$,
and so by
\begin{align}
  \left| \int_{\R^{3}} (V_{1} + V_{2}) |\psi_{i}|^{2} \right| \le \|V_{1}\|_{L^{2}}\||\psi_{i}|^{2}\|_{L^{2}} + \|V_{2}\|_{L^{\infty}} \|\psi_{i}\|_{L^{2}}^{2},
\end{align}
the second term is also smooth from $H^{2}$ to $\R$. From Lemma
\ref{lemma_stabilite}, the map from $\Psi$ to the operator
$L_{\Psi}$ of multiplication by $\rho_{\Psi} \ast \frac 1 {|r|}$ is smooth
from $H^2$ to $\mathcal L(H^2, H^2)$, and therefore the third term
\begin{align}
  \frac{1}{2} \int_{\mathbb{R}^3} \frac{\rho_{\Psi}(r) \rho_{\Psi}(r')}{|r-r'|}drdr' = \frac 1 2 \sum_{i=1}^{N}\langle \psi_{i}| L_{\Psi} \psi_{i} \rangle
\end{align}
is smooth from $H^{2}$ to $\R$.

For all $\Psi \in (H^{2})^{N}$, $\Psi$ is bounded so $\rho$ is
also bounded. Since $e_{\rm xc}(0)=0$, there is $C_{\Psi}$ such that
$|e_{\rm xc}(\rho)| \le C_{\Psi} \rho$. Since $\rho$ is integrable, so
is $e_{\rm xc}(\rho)$. Furthermore, since $ e_{\rm xc}$ is $\mathcal{C}^2$:
\begin{align}
  e_{\rm xc}(\rho)&= e_{xc}(0)+\rho e_{\rm xc}'(0)+ \frac{\rho^2}{2}e_{xc}''(0)+ o(\rho^2)
\end{align}
Since $e_{\rm xc}(0)=0$ and $\rho$ as well as $\rho^2$ are integrable for $\rho \in H^2$, $\Psi \mapsto \int_{\mathbb{R}^3 } e_{xc}(\rho_{\Psi}(r))dr$ is $\mathcal{C}^2$ from $H^2$ to $\mathbb{R}$. 
\end{proof}

\begin{proof}[Proof of Proposition
  \ref{proposition:operator_M_dyn}]
  Let $U^{1} \in (H^{2})^{2N} \cap {\rm Ran}(1-P_{0})$. For all
  $\varepsilon > 0$, let
  \begin{align*}
    \Psi(\varepsilon) = {\rm Ortho}(\Psi^{0} + \varepsilon U^{1}),
  \end{align*}
  where
  \begin{align*} {\rm Ortho}(\Psi) = \Psi (\Psi^{*}\Psi)^{-1/2}.
  \end{align*}
  We have the expansion
  \begin{align*}
    \Psi(\varepsilon) = \Psi^{0} + \varepsilon U^{1} + \varepsilon^{2} U^{2} + O(\varepsilon^{3})
  \end{align*}
  in $H^{2}$.

  Since $\Psi(\varepsilon)^{*}\Psi(\varepsilon)=1$ for all
  $\varepsilon$, by identification, we get
  \begin{align}
    (U^{2})^{*} U^{0} + (U^{0})^{*} U^{2} = -  (U^{1})^{*} U^{1}. \label{eq:critere_2_manifold}
  \end{align}
  We can then compute
  \begin{align*}
    \mathcal{E}(\Psi(\varepsilon))&= \mathcal{E}(\Psi^0)+ 2{\rm Re}\langle H_0\Psi^0 | \varepsilon  U^1 +\varepsilon^2  U^2 \rangle + \langle \varepsilon  U^1|H_0|\varepsilon  U^1\rangle + {\rm Re}\langle \varepsilon  U^1| \mathcal{K}_{0}(\varepsilon  U^1)\rangle  +o(\varepsilon^2)\\
    &=  \mathcal{E}(\Psi^0)+ \varepsilon^2 \Big(2{\rm Re} \langle H_0  \Psi^0 |  U^2\rangle + \langle  U^1|H_0| U^1\rangle+ {\rm Re}\langle  U^1 | \mathcal{K}_{0} ( U^1) \rangle\Big) + o (\varepsilon^2)\\
    2 {\rm Re}\langle H_0  \Psi^0 |  U^2\rangle&= 2{\rm Re}\left(\sum_{i=1}^N \lambda_i \langle  \psi^0_i|u_i^2\rangle\right)\\
    &= -\sum_{i=1}^N \lambda_i \langle  u_i^1|  u_i^1\rangle 
    \mbox{ by \eqref{eq:critere_2_manifold}}
  \end{align*}
  so that
  \begin{align}
    \mathcal{E}(\Psi(\varepsilon))&= \mathcal{E}(\Psi^0) + \varepsilon^2 \langle  U^1|\mathcal{M}_{\rm dyn}( U^1)\rangle+o(\varepsilon^2).
  \end{align}

  On the other hand,
  we can solve the orthogonal Procustes problem
  \begin{align*}
    \min_{R \in {\rm U}(N)} \|\Psi - \Psi^{0} R\|^{2} &= 2 N - 2\max_{R \in{\rm U}(N)} {\rm Re} \langle \Psi, \Psi^{0} R \rangle\\
    &= 2 N - 2\max_{R \in{\rm U}(N)} {\rm Re} \langle  (\Psi^{0})^{*} \Psi ,R\rangle\\
    &=2 N - 2\max_{R \in{\rm U}(N)} {\rm Re} \langle  (\Psi^{0})^{*} \Psi ,R\rangle\\
    &=2N - 2 \max_{R' \in{\rm U}(N)} {\rm Re} \langle  \Sigma  ,R'\rangle\\
    &= 2N - 2 {\rm Tr}(\Sigma)\\
  \end{align*}
  where $\Sigma$ is the diagonal matrix of the singular values of
  $(\Psi^{0})^{*} \Psi$. We have
  \begin{align*}
    (\Psi^{0})^{*} \Psi &= 1 + \varepsilon^{2} (U^{0})^{*} U^{2} + O(\varepsilon^{3})\\
    ((\Psi^{0})^{*} \Psi)^{*} ((\Psi^{0})^{*} \Psi) &= 1 + \varepsilon^{2} \left((U^{0})^{*} U^{2} + (U^{0})^{*} U^{2}\right) + O(\varepsilon^{3})\\
    &= 1 + 2\varepsilon^{2} (U^{1})^{*}U^{1} + O(\varepsilon^{3})\\
    {\rm Tr}(\Sigma)&= {\rm Tr}\left( \sqrt{((\Psi^{0})^{*} \Psi)^{*} ((\Psi^{0})^{*} \Psi)} \right)\\
    &= N + \varepsilon^{2} {\rm Tr} \left( (U^{1})^{*} U^{1} \right) + O(\varepsilon^{3})
  \end{align*}
  so that
  \begin{align*}
    \min_{R \in {\rm U}(N)} \|\Psi - \Psi^{0} R\|^{2} &=2\varepsilon^{2} \|U^{1}\|^{2} + O(\varepsilon^{3})
  \end{align*}
  and the result follows.
  
\end{proof}

\section{Control of the Coulomb terms}

  We state a useful technical lemma, which ensures stability in $H^2$ in several occasions through the article. 

  \begin{lemma}\label{lemma_stabilite}
      Let $f$, $g$, $h$ three functions of $H^2$. Then the function $(fg*\tfrac{1}{|r|})h$ is in $H^2$, and there exists a constant $c$ such that:
      \begin{align}
          \|(fg*\tfrac{1}{|r|})h\|_{H^2} \leq c \|f\|_{H^2} \|g\|_{H^2} \|h\|_{H^2}.
      \end{align} 
  \end{lemma}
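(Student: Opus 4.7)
The plan is to exploit the fact that $H^{2}(\mathbb{R}^{3})$ is a Banach algebra (by the Sobolev embedding $H^{2}\hookrightarrow L^{\infty}$), together with the fact that $\tfrac 1 {4\pi|r|}$ is the Green's function of $-\Delta$. First, set $u=fg$. Since $H^{2}$ is an algebra, $u\in H^{2}$ with $\|u\|_{H^{2}}\le c\|f\|_{H^{2}}\|g\|_{H^{2}}$. Moreover, by Cauchy--Schwarz, $\|u\|_{L^{1}}\le\|f\|_{L^{2}}\|g\|_{L^{2}}\le \|f\|_{H^{2}}\|g\|_{H^{2}}$, so $u\in L^{1}\cap H^{2}$; the same applies to $\nabla u$, which belongs to $L^{1}\cap H^{1}$ by the Leibniz rule.

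Next I show that $V := u * \tfrac{1}{|r|}$ lies in $W^{1,\infty}$ with $\|V\|_{W^{1,\infty}}\le c\|f\|_{H^{2}}\|g\|_{H^{2}}$. To prove $V\in L^\infty$, I split $\tfrac 1 {|r|}=K_{1}+K_{2}$ with $K_{1}=\tfrac{\mathbf{1}_{|r|\le1}}{|r|}$ and $K_{2}=\tfrac{\mathbf{1}_{|r|>1}}{|r|}$. Then $K_{1}\in L^{2}$ and $K_{2}\in L^{\infty}$, so Young's inequality yields
\begin{align}
\|V\|_{L^{\infty}}\le \|u*K_{1}\|_{L^{\infty}}+\|u*K_{2}\|_{L^{\infty}}\le \|u\|_{L^{2}}\|K_{1}\|_{L^{2}}+\|u\|_{L^{1}}\|K_{2}\|_{L^{\infty}},
\end{align}
which is bounded by $c\|f\|_{H^{2}}\|g\|_{H^{2}}$. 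Since $\nabla V=\nabla u * \tfrac 1{|r|}$ and $\nabla u\in L^{1}\cap L^{2}$, the same argument gives the analogous bound on $\|\nabla V\|_{L^{\infty}}$. Finally, I use the distributional identity $-\Delta V=4\pi u=4\pi fg$, which places $\Delta V$ in $H^{2}$ (so in particular in $L^{\infty}$) with norm $\le c\|f\|_{H^{2}}\|g\|_{H^{2}}$; importantly, I do not need $V$ itself to be in $L^{2}$, which in general fails because $V$ decays only like $1/|r|$ at infinity.

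It then remains to estimate $\|Vh\|_{H^{2}}$ by the Leibniz rule. The three terms are
\begin{align}
\|Vh\|_{L^{2}}&\le \|V\|_{L^{\infty}}\|h\|_{L^{2}},\\
\|\nabla(Vh)\|_{L^{2}}&\le \|\nabla V\|_{L^{\infty}}\|h\|_{L^{2}}+\|V\|_{L^{\infty}}\|\nabla h\|_{L^{2}},\\
\|\Delta(Vh)\|_{L^{2}}&\le \|(\Delta V)h\|_{L^{2}}+2\|\nabla V\cdot\nabla h\|_{L^{2}}+\|V\Delta h\|_{L^{2}}.
\end{align}
The first two lines are immediate from the $W^{1,\infty}$ bound on $V$. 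For the third line, $\|V\Delta h\|_{L^{2}}\le\|V\|_{L^{\infty}}\|h\|_{H^{2}}$ and $\|\nabla V\cdot\nabla h\|_{L^{2}}\le\|\nabla V\|_{L^{\infty}}\|h\|_{H^{2}}$ again follow from $V\in W^{1,\infty}$, while the remaining term is bounded using $\Delta V=-4\pi fg\in L^{\infty}$, giving $\|(\Delta V)h\|_{L^{2}}\le 4\pi\|fg\|_{L^{\infty}}\|h\|_{L^{2}}\le c\|f\|_{H^{2}}\|g\|_{H^{2}}\|h\|_{H^{2}}$. Summing the three estimates yields the claim.

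The only mildly delicate point, rather than an obstacle, is the one already noted: $V$ itself is not in $H^{2}$ in general, so the estimate cannot be obtained by a naive application of the $H^{2}$ algebra property to the product $V\cdot h$; one really has to treat $V$ as an $L^{\infty}$ multiplier whose Laplacian happens to be well-controlled via $-\Delta V=4\pi fg$.
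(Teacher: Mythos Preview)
Your proof is correct and takes a somewhat different, more elementary route than the paper. The paper's argument relies on a preparatory lemma built on the Hardy--Littlewood--Sobolev inequality: it expands the second derivatives of $(fg*\tfrac1{|r|})h$ via Leibniz, pushes the derivatives inside the convolution onto $fg$, and then bounds each resulting term of the form $(a*\tfrac1{|r|})b$ in $L^2$ by pairing an HLS estimate (for $a\in L^1\cap L^{3/2}$) with a H\"older inequality (for $b\in L^t$, $t\in(2,6)$). Only for the term $(fg*\tfrac1{|r|})\nabla^2 h$ does the paper resort to the near/far splitting of $\tfrac1{|r|}$ to get $V\in L^\infty$. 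By contrast, you use that near/far splitting from the outset to obtain $V\in W^{1,\infty}$, and then exploit the Green's function identity $-\Delta V=4\pi fg$ to control the remaining second-order term directly, bypassing HLS entirely. Your approach is shorter and uses only Young's inequality and the $H^2\hookrightarrow L^\infty$ embedding; the paper's HLS lemma is more general-purpose (it handles $(a*\tfrac1{|r|})b$ for a range of integrabilities) but is heavier machinery than this particular statement requires.
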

  
  The proof relies on the Hardy-Littlewood-Sobolev (HLS) inequality in dimension 3:
  \begin{align}
      \forall p, q >0;~ 1 < p,q<\infty, ~s.t.~ 1 +\frac{1}{p} = \frac{1}{q} + \frac{1}{3},\\
      \exists C_{HLS}^{p,q}>0, ~ \forall a \in L^q: ~
      a*\tfrac{1}{|r|} \in L^p \mbox{ and } \|a* \frac{1}{|r|}\|_{L^p} \leq C_{HLS}^{p, q} \|a\|_{L^q} .
  \end{align}
  
  \begin{lemma}\label{lemma_ab}
      Let $a$ and $b$ two complex valued functions such that $a$ is in $L^{1}\cap L^{\tfrac{3}{2}}$ and $b$ is in $L^{t}$, with $t$ in $(2, 6)$. 
       Then $(a*\tfrac{1}{|r|})b$ is $L^2$, and $\|(a*\tfrac{1}{|r|})b\|_{L^2} \leq c  (\|a\|_{L^1}+ \|a\|_{L^{\frac{3}{2}}})\|b\|_{L^{t}}$, where $c$ is an unimportant constant. 
  \end{lemma}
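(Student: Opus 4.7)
The plan is to combine a dyadic split of the Coulomb kernel with Young's and Hölder's inequalities. The key observation is that $\frac{1}{|r|}$ fails to lie in any single $L^q(\R^3)$, but splitting it into a singular part and a tail part lets us exploit $a \in L^1$ for one piece and $a \in L^{3/2}$ for the other. Concretely, I would write
\begin{align}
\frac{1}{|r|} = g_1 + g_2, \qquad g_1(r) = \frac{\mathbf{1}_{|r|\le 1}}{|r|}, \qquad g_2(r) = \frac{\mathbf{1}_{|r|>1}}{|r|},
\end{align}
and observe that $g_1 \in L^q(\R^3)$ for every $q<3$ (the singularity at the origin is integrable for such $q$), while $g_2 \in L^q(\R^3)$ for every $q>3$ (the tail is integrable for such $q$).

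The first step is to find an $L^s$ norm of $a*\tfrac{1}{|r|}$ that pairs with $b \in L^t$ via Hölder to land in $L^2$. Setting $\tfrac{1}{s} = \tfrac{1}{2} - \tfrac{1}{t}$, the hypothesis $t \in (2,6)$ gives $s \in (3,\infty)$, so there is room to absorb both the singular and tail contributions. Then I would bound $\|a*g_1\|_{L^s}$ using Young's inequality with exponents $(3/2,\,q_2)$ where $1+\tfrac{1}{s}=\tfrac{2}{3}+\tfrac{1}{q_2}$: this forces $q_2 = \tfrac{3s}{s+3} \in (3/2,3)$ as $s$ ranges over $(3,\infty)$, so $g_1 \in L^{q_2}$ and
\begin{align}
\|a*g_1\|_{L^s} \le C\,\|a\|_{L^{3/2}} \|g_1\|_{L^{q_2}}.
\end{align}
Symmetrically, Young's inequality with exponents $(1,s)$ yields $\|a*g_2\|_{L^s} \le \|a\|_{L^1}\|g_2\|_{L^s}$, where $\|g_2\|_{L^s}<\infty$ precisely because $s>3$.

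Putting these together gives
\begin{align}
\bigl\|a*\tfrac{1}{|r|}\bigr\|_{L^s} \le C\bigl(\|a\|_{L^1}+\|a\|_{L^{3/2}}\bigr),
\end{align}
and a final application of Hölder with $\tfrac{1}{s}+\tfrac{1}{t}=\tfrac{1}{2}$ yields the claimed estimate. There is no real obstacle here; the only thing to check carefully is that the chosen $s$ belongs to the admissible range for both Young applications, which is exactly the content of the hypothesis $t\in(2,6)$ (the endpoints $t=2$ and $t=6$ correspond to $s=\infty$ and $s=3$, where one of $g_1, g_2$ exits the good integrability range). Note that one could alternatively interpolate $a \in L^1 \cap L^{3/2}$ into a single intermediate $L^q$ with $1<q<3/2$ and invoke the standard HLS inequality, but the split above is more elementary and gives the estimate with the sum of the two norms directly.
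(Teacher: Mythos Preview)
Your proof is correct. The paper takes the alternative route you mention at the end: it picks the Hölder exponent $p$ with $\tfrac{1}{p}+\tfrac{1}{t}=\tfrac{1}{2}$ (so $p\in(3,\infty)$), finds the matching HLS exponent $q\in(1,\tfrac{3}{2})$ via $\tfrac{1}{p}=\tfrac{1}{q}-\tfrac{2}{3}$, and then uses $a\in L^{1}\cap L^{3/2}\subset L^{q}$ to apply Hardy--Littlewood--Sobolev directly, followed by one Hölder. Your dyadic split of the kernel plus two applications of Young's inequality is a genuinely different, more elementary argument: it avoids invoking HLS and makes transparent why the two endpoint norms $\|a\|_{L^{1}}$ and $\|a\|_{L^{3/2}}$ enter (one controls the tail of the kernel, the other the singularity). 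The paper's approach is shorter once HLS is taken as a black box, and it shows clearly that any single $L^{q}$ norm with $q\in(1,\tfrac{3}{2})$ would already suffice; yours has the pedagogical advantage of being self-contained and of delivering the sum of norms without an interpolation step.
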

  \begin{proof}[Proof of Lemma \ref{lemma_ab}]~
  
      Let $p$ such that $\frac{1}{p}+\frac{1}{t}= \frac{1}{2}$, with $t$ in $(2, 6)$. Then $p$ is in $(3, +\infty)$. Thus there exists $q$ in $(1, \frac{3}{2})$ such that $p$ and $q$ are an admissible pair for the HLS inequality. Since $a$ is in $L^1 \cap L^{\tfrac{3}{2}}$, it is in $L^{q}$ as well and we can write:
      \begin{gather}
          \|a* \frac{1}{|r|}\|_{L^{p}} \leq C_{HLS}^{p, q} \|a\|_{L^{q}}
      \end{gather}
      By the Hölder inequality, $(a* \frac{1}{|r|})b$ is $L^2$ and:
      \begin{align}
          \|\left(a* \frac{1}{|r|}\right)b\|_{L^{2}}&\leq C_{HLS}^{p, q} \|a\|_{L^{q}} \|b\|_{L^{t}}\\
          \|\left(a* \frac{1}{|r|}\right)b\|_{L^{2}}&\leq C_{HLS}^{p, q} (\|a\|_{L^{1}}+\|a\|_{L^{\frac{3}{2}}})\|b\|_{L^{t}}
      \end{align}
  \end{proof}
  \begin{figure}[!h]
      \centering
           \includegraphics[width=0.8\textwidth]{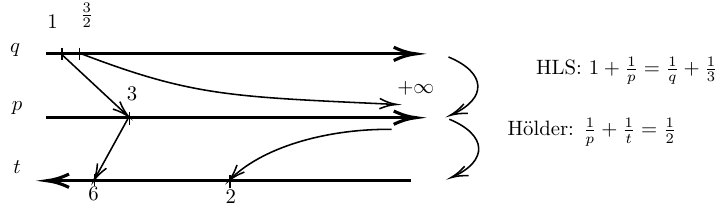}
           \caption{Choices of $p$, $q$, $t$ to apply Lemma \ref{lemma_ab}}
   \end{figure}

  \begin{proof}[Proof of Lemma \ref{lemma_stabilite}]~
  
  We prove that $\left(fg*\tfrac{1}{|r|}\right)h$ is $L^2$ and then that its second derivative is $L^2$ as well. The proof is the application of Lemma \ref{lemma_ab}. 
  
  $fg$ is in $H^2$, it thus belongs to $L^\infty \cap L^2$. It is $L^1$ as well by the Hölder inequality, since both $f$ and $g$ are $L^2$. Therefore $fg$ is $L^1 \cap L^{\tfrac{3}{2}}$. $h$ is $L^2 \cap L^\infty$, in particular it is $L^{t}$ for any $t$ in $(2, 6)$, and we can apply Lemma \ref{lemma_ab}.
  
  We now show that the second derivative of the function belongs to $L^2$ . It only contains terms of the following form:
  \begin{itemize}
      \item $ \left(f \nabla^2 g * \tfrac{1}{|r|}\right) h$. $f$ and $\nabla^2g$ are both $L^2$, thus their product is $L^1$. Since $f$ is bounded and $\nabla^2g$ is $L^2$, $f\nabla^2g$ is $L^{2}$ as well and Lemma \ref{lemma_ab} can be applied. 
      \item $ \left(\nabla f \nabla g * \tfrac{1}{|r|}\right)  h$. $\nabla f$ and $\nabla g$ are both $L^2$, thus their product is $L^1$. They are also both in $H^1$, which injects itself continuously (by Sobolev injection) in $L^6$ in dimension 3. Their product is thus in $L^3$.
      \item $ \left(f \nabla g* \tfrac{1}{|r|}\right)  \nabla h$. $f\nabla g$ is $L^1 \cap L^2$. $\nabla h$ belongs to $H^1$, which injects itself continuously in $L^6$ by the Sobolev inequality. It also belongs to $L^2$.
      \item $ \left(fg * \tfrac{1}{|r|} \right) \nabla^2 h$. In this case, the HLS inequality cannot be directly applied, since $h$ is only $L^2$. We would need $\left(fg*\tfrac{1}{|r|}\right)$ to be $L^\infty$ for the Hölder inequality, which is not an admissible $p$ exponent. However, boundedness can easily be obtained like this:
      \begin{align}
          \forall r \in \mathbb{R}^3, |\left(fg*\tfrac{1}{|r|}\right)(r)|& \leq \int_{\mathbb{R}^3} \frac{|(fg)(r')|}{|r-r'|} dr'\\
          &= \int_{\mathcal{B}_{r, 1}} \frac{|(fg)(r')|}{|r-r'|} dr' + \int_{\mathbb{R}^3 \backslash \mathcal{B}_{r, 1}}\frac{|(fg)(r')|}{|r-r'|} dr' \\
          &\leq   \int_{\mathcal{B}_{0, 1}} \frac{1}{|r'|}dr' \sup_{ r' \in \mathbb{R}^3} |(fg)(r')|+ \int_{\mathbb{R}^3}|(fg)(r')|dr'\\
          &\leq  K\|f\|_{L^\infty} \|g\|_{L^\infty} + \|f\|_{L^2} \|g\|_{L^2} \quad \mbox{with} \quad K= \int_{\mathcal{B}_{0, 1}} \frac{1}{|r'|}dr' .
      \end{align}
  \end{itemize}
  
  \end{proof}

\section{Technical lemmas on \texorpdfstring{$S_0$}{S0} and \texorpdfstring{$K_0$}{K0} }

\begin{lemma}
\label{lem:S_et_K}
The following assertions are true:
\begin{enumerate}
  \item $S_0 : (H^2)^{2N} \to (H^2)^{2}$ is bounded;
  \item $K_0 : (H^1)^{2N}\to (L^2)^{2N}$ is bounded;
  \item $K_0 : (H^2)^{2N}\to (H^2)^{2N}$ is bounded;
  \item $K_0 : (H^2)^{2N} \to (L^2)^{2N}$ is compact;
  \item For $\alpha>0$ small enough, $K_0 : (H_\alpha^2)^{2N} \to (L_{-\alpha}^2)^{2N}$ is bounded.
\end{enumerate}
\end{lemma}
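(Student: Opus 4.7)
All five statements reduce to the algebra property of $H^2(\R^3)$ in dimension three, the Sobolev embeddings $H^2 \hookrightarrow L^\infty \cap L^2$ and $H^1 \hookrightarrow L^p$ for $p \in [2,6]$, the Coulomb bounds of Lemmas \ref{lemma_stabilite}--\ref{lemma_ab}, and the exponential decay of the occupied orbitals guaranteed by Assumption \ref{assumption_negative_eigvals}. I will apply these tools term by term to the explicit expressions
\begin{align*}
  S_0(U) &= \sum_{j=1}^N \overline{\psi_j^0}\, u_j + \psi_j^0 \,\overline{u_j}, \qquad (K_0 U)_i = \psi_i^0 \,F(U),\\
  F(U) &:= \bigl(S_0(U) * \tfrac{1}{|r|}\bigr) + v_{\rm xc}'(\rho_0)\, S_0(U).
\end{align*}

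Item (1) is an immediate consequence of the algebra property, since each summand of $S_0(U)$ is a product of two $H^2$ functions. For (2), $U \in (H^1)^{2N}$ together with $\psi_j^0 \in L^2 \cap L^\infty$ and $u_j \in L^2 \cap L^6$ (Sobolev) yields $S_0(U) \in L^1 \cap L^{3/2}$; Lemma \ref{lemma_ab} (with $b = \psi_i^0 \in L^4$) then bounds $(S_0(U)*1/|r|)\psi_i^0$ in $L^2$, and the local contribution $v_{\rm xc}'(\rho_0) S_0(U)\psi_i^0$ is in $L^2$ by H\"older since $v_{\rm xc}'(\rho_0)$ and $\psi_i^0$ are bounded. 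For (3), the Coulomb part is a direct application of Lemma \ref{lemma_stabilite} to each summand of $S_0(U)$; the local part requires checking that multiplication by $v_{\rm xc}'(\rho_0)$ preserves $H^2$. This is the only delicate step: since $v_{\rm xc}'(0)$ need not vanish, $v_{\rm xc}'(\rho_0)$ is not in $L^2$, and I would write $v_{\rm xc}'(\rho_0) = v_{\rm xc}'(0) + g(\rho_0)$ with $g(t) := v_{\rm xc}'(t) - v_{\rm xc}'(0)$ satisfying $g(0) = 0$ and $g \in C^2$ (Assumption \ref{assumption2}). A chain-rule computation combined with $\rho_0, \nabla^2\rho_0 \in L^2$ and $\nabla \rho_0 \in H^1 \subset L^4$ gives $g(\rho_0) \in H^2$, which then acts as a multiplier on $H^2$ by the algebra property.

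For (4), I exploit that the outer factor $\psi_i^0$ in $(K_0 U)_i$ lies in $L^2_{\alpha_0}$ for some $\alpha_0 > 0$. The pointwise estimate extracted from the proof of Lemma \ref{lemma_stabilite} gives $\|F(U)\|_{L^\infty} \leq C\|U\|_{H^2}$, hence $\|K_0(U)_i\|_{L^2_{\alpha_0}} \leq C\,\|\psi_i^0\|_{L^2_{\alpha_0}}\|U\|_{H^2}$. Combined with item (3), $K_0$ maps $(H^2)^{2N}$ boundedly into $(H^2 \cap L^2_{\alpha_0})^{2N}$, and the embedding $H^2 \cap L^2_{\alpha_0} \hookrightarrow L^2$ is compact by the standard argument combining Rellich--Kondrachov on balls $B_R$ with the uniform tail bound $\|u\|_{L^2(\{|r|>R\})} \leq e^{-\alpha_0 \langle R\rangle}\|u\|_{L^2_{\alpha_0}}$ and a diagonal extraction. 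For (5), taking $0 < \alpha < \alpha_0$ and using the pointwise inequality $|U| \leq |U e^{\alpha\langle r\rangle}|$ gives $\|U\|_{H^2} \leq \|U\|_{H^2_\alpha}$, hence $\|F(U)\|_{L^\infty} \leq C\|U\|_{H^2_\alpha}$, and
\[
\|K_0(U)_i\|_{L^2_{-\alpha}} = \|\psi_i^0 F(U)\, e^{-\alpha\langle r\rangle}\|_{L^2} \leq \|F(U)\|_{L^\infty}\,\|\psi_i^0\|_{L^2} \leq C\|U\|_{H^2_\alpha}.
\]

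The main (and only serious) obstacle is the treatment of the nonlinear multiplier $v_{\rm xc}'(\rho_0)$ in (3) when $v_{\rm xc}'(0) \neq 0$, which is resolved by the ``subtract the constant'' decomposition above combined with a nonlinear Sobolev composition estimate; everything else is a direct combination of the algebra property, the Coulomb lemmas, and the exponential decay of $\psi_i^0$.
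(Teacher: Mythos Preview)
Your proof is correct and uses essentially the same ingredients as the paper (the $H^2$ algebra property, Lemmas \ref{lemma_stabilite}--\ref{lemma_ab}, Sobolev embeddings, and the exponential decay of $\psi_i^0$), but you deploy them differently in a few places.

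For item (2), the paper invokes a Hardy inequality for the Coulomb term while you go directly through Lemma \ref{lemma_ab}; your route is slightly more explicit. For item (3), you are actually \emph{more} careful than the paper: the paper simply asserts $v_{\rm xc}'(\rho_0)S_0(U)\Psi_0\in (H^2)^{2N}$ ``by assumption on $v_{\rm xc}$'', whereas you notice that $v_{\rm xc}'(\rho_0)$ itself need not lie in $H^2$ when $v_{\rm xc}'(0)\neq 0$ and resolve it by the decomposition $v_{\rm xc}'(\rho_0)=v_{\rm xc}'(0)+g(\rho_0)$ with $g(0)=0$; this is the right fix. For item (4) the paper splits the two pieces of $K_0$ and treats the local part $V\mapsto v_{\rm xc}'(\rho_0)S_0((-\Delta+1)^{-1}V)\Psi_0$ as a Hilbert--Schmidt operator of the form $f(x)g(-\ii\nabla)$, and the Coulomb part via Lemma \ref{lemma_stabilite} plus Rellich--Kondrachov using the decay of $\Psi_0$. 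Your argument is more unified: you show $\|F(U)\|_{L^\infty}\le C\|U\|_{H^2}$ (which is indeed established inside the proof of Lemma \ref{lemma_stabilite}), deduce that $K_0$ lands in $(H^2\cap L^2_{\alpha_0})^{2N}$, and then use the compact embedding $H^2\cap L^2_{\alpha_0}\hookrightarrow L^2$. Both approaches are valid; yours avoids the Hilbert--Schmidt machinery at the cost of relying on the pointwise Coulomb bound buried in the proof of Lemma \ref{lemma_stabilite}. One small remark on item (5): the pointwise inequality $|U|\le |Ue^{\alpha\langle r\rangle}|$ immediately gives $\|U\|_{L^2}\le\|U\|_{L^2_\alpha}$, but passing to $\|U\|_{H^2}\le C\|U\|_{H^2_\alpha}$ requires commuting one or two derivatives through the weight $e^{\alpha\langle r\rangle}$; this is routine (the weight has bounded derivatives of all orders) but worth saying explicitly.
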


\begin{proof}
  \begin{enumerate}
    \item \textbf{$S_0 : (H^2)^{2N} \to (H^2)^{2}$ is bounded}

    This follows from Assumption~\ref{assumption3} and the fact that $H^2$ is an algebra.

    \item \textbf{$K_0 : (H^1)^{2N}\to (L^2)^{2N}$ is bounded}

    Again by Assumption~\ref{assumption3}, for $U \in (H^1)^{2N}$, $S_0(U) \in (H^1)^{2N}$, so by assumption on $v_\mathrm{xc}$, $v'_\mathrm{xc}(\rho_0) S_0(U) \Psi_0 \in (H^1)^{2N}$.
    For $S_0(U) * \frac{1}{|r|}$, we use a Hardy inequality to deduce that $\|S_0(U) * \frac{1}{|r|}\|_{L^2} \leq \|U\|_{H^1}$.
    This proves the assertion. 

    \item \textbf{$K_0 : (H^2)^{2N}\to (H^2)^{2N}$ is bounded}
    
    By assumption on $v_\mathrm{xc}$, and since $H^2$ is an algebra, for $U \in (H^2)^{2N}$, $v'_\mathrm{xc}(\rho_0) S_0(U) \Psi_0 \in (H^2)^{2N}$. 
    By Lemma~\ref{lemma_stabilite}, we have that $S_0(U)*\frac{1}{|r|} \Psi_0 \in (H^2)^{2N}$ for $U \in (H^2)^{2N}$. 
    This finishes the proof of this item.
  
    \item \textbf{$K_0 : (H^2)^{2N} \to (L^2)^{2N}$ is compact}
    
    For the proof of the compactness of $K_0$, we have for $U \in (H^2)^{2N}$
    \[
        K_0(U) = \left( S_0(U) * \frac{1}{|\cdot|} + v'_{xc}(\rho) S_0(U) \right) \Psi_0.
    \]
    $V \mapsto v'_{xc}(\rho) S_0((-\Delta+1)^{-1})V) \Psi_0$ is an operator of the form $f(x) g(-\ii \nabla)$ with $f,g \in L^2$, hence it is Hilbert-Schmidt~\cite[Theorem 4.1]{barry} thus compact.
    By Lemma~\ref{lemma_stabilite}, for $V \in (L^2)^{2N}$, $S_0((-\Delta + 1)^{-1}V) * \frac{1}{|\cdot|}\Psi_0$ is in $(H^2)^{2N}$. 
    Using that $\Psi_0 \in H^2_\alpha$ we have exponential decay of $\Psi_0$ and its derivative, so by the Rellich-Kondrachov compactness embedding, $V \mapsto S_0((-\Delta + 1)^{-1}V) * \frac{1}{|\cdot|}\Psi_0$ is compact on $(L^2)^{2N}$.

    \item \textbf{$K_0 : (H_\alpha^2)^{2N} \to (L_{-\alpha}^2)^{2N}$ is bounded}
    
    This follows from the assumption on the exponential decay of the eigenfunction.
  \end{enumerate}
    
\end{proof}

\bibliography{biblio}
\bibliographystyle{unsrt}

\end{document}